\documentclass[12pt]{amsart}

\usepackage{amsmath,amssymb,amsthm,mathtools}
\usepackage{lmodern}
\usepackage{microtype}
\usepackage[hidelinks]{hyperref}
\usepackage[nameinlink,capitalise,noabbrev]{cleveref}

\newtheorem{theorem}{Theorem}[section]
\newtheorem{lemma}[theorem]{Lemma}

\theoremstyle{definition}

\newtheorem{remark}[theorem]{Remark}
\numberwithin{equation}{section}

\newcommand{\F}{\mathbb F}

\newcommand{\one}{\mathbf 1}

\newcommand{\RN}{\operatorname{RN}}
\newcommand{\Cr}{\mathrm{Cr}}
\newcommand{\dd}{\,\mathrm d}
\newcommand{\ee}{\mathrm e}
\newcommand{\abs}[1]{\lvert#1\rvert}
\newcommand{\norm}[1]{\lVert#1\rVert}
\DeclareMathOperator{\codeg}{codeg}
\DeclareMathOperator{\Rad}{Rad}

\title[Hamiltonicity in arithmetic graphs]{Hamiltonicity in graphs defined by primes and primitive elements}

\author{Yue-Feng She}
\address{Department of Applied Mathematics, Nanjing Forestry University,
Nanjing 210093, People's Republic of China}
\email{she.math@smail.nju.edu.cn}

\subjclass[2020]{05C45, 11A41, 11T23}
\keywords{Hamilton cycle, prime circle, primitive element, robust expansion,
character sum}

\begin{document}

\begin{abstract}
A prime circle of order $2n$ is a circular ordering of $1,\ldots,2n$ such
that the sum of every two adjacent terms is prime. We prove that a prime
circle exists for every sufficiently large $n$. The proof is based on a
perfect matching and robust expansion. We also study Hamilton cycles in graphs and
digraphs defined by primitive sums and differences over finite fields. In
particular, the primitive-sum graph on $\F_q$ is Hamiltonian for every prime
power $q>18\,888\,871$, and for the graph on a full prime field $\F_p$, the bound improves to
$p>61$.
\end{abstract}

\maketitle

\section{Introduction}\label{sec:introduction}

Questions concerning permutations satisfying arithmetic restrictions have
a long history in combinatorics and number theory. For a positive integer $m$, write
\[
[m]=\{1,2,\ldots,m\}.
\]
A prime circle of order $2n$ is a circular ordering of $[2n]$ such that the
sum of every two adjacent terms is prime. In 1982, Filz \cite{Filz} posed the prime circle conjecture, which asserts that a prime circle of order $2n$ exists for every positive integer $n$. The conjecture is also recorded in Guy's book \cite{Guy}.

As usual, we denote by $V(G)$ and $E(G)$ the vertex set and the edge set of a graph $G$ respectively. For each positive integer $n$, let
\[
  O_n=\{1,3,\ldots,2n-1\},\qquad
  E_n=\{2,4,\ldots,2n\},
\]
and define the prime-sum graph $\Gamma_n$ on $[2n]$ by
\[
  xy\in E(\Gamma_n)
  \quad\Longleftrightarrow\quad
  x\neq y \text{ and } x+y\text{ is prime}.
\]
Apart from the sum $1+1=2$, parity makes $\Gamma_n$ bipartite with parts $O_n$ and $E_n$. For $n\geq2$, a prime circle of order $2n$ is
precisely a Hamilton cycle in $\Gamma_n$.

Greenfield and Greenfield \cite{GreenfieldGreenfield} proved that the prime-sum graph on $[m]$ has a
perfect matching if and only if $m$ is even.
Chen, Fu and Guo \cite{ChenFuGuo} proved that a prime circle of order $2n$
exists if there are primes $p_1<p_2\leq2n$ such that $2n+p_1$ and
$2n+p_2$ are prime and
\[
  \gcd\!\left(\frac{p_2-p_1}{2},n\right)=1.
\]
Together with bounded gaps between primes, this gives prime circles for
infinitely many orders.  More recently, Shang, Li and Zhang \cite{ShangLiZhang} constructed a
two-sided ordering of all positive integers in which the sum of every two
adjacent terms is prime. The question whether a prime
circle of order $2n$ exists for every positive integer $n$ remains open. Our first result proves the
conjecture for all sufficiently large $n$.

\begin{theorem}\label{thm:prime-circle}
There is an integer $n_0$ such that $\Gamma_n$ is Hamiltonian for every
$n\geq n_0$. Equivalently, a prime circle of order $2n$ exists for every
sufficiently large $n$.
\end{theorem}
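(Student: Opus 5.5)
We plan to use the robust-expansion framework of Kühn, Osthus and Treglown, in the bipartite form: a balanced bipartite graph that is a robust expander with linear minimum degree is Hamiltonian. The abstract's phrase ``perfect matching and robust expansion'' suggests absorbing the low-degree vertices first. The obstacle is that $\Gamma_n$ is not dense in the usual sense: a vertex $x$ has degree roughly $\pi(x+2n)-\pi(x)\approx 2n/\log n$, which is $o(n)$. So the plan is to reduce to a dense auxiliary structure. We do this by grouping vertices into residue-and-size blocks and exploiting that the neighbourhoods are quasi-random: for $x\in O_n$, its neighbours are the $y\in E_n$ with $x+y$ prime. Counting common structure requires primes in short intervals and in arithmetic progressions.

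\textbf{Step 1 (pseudorandomness).} Using the prime number theorem in short intervals and a Bombieri--Vinogradov/Barban--Davenport--Halberstam type estimate, we will show the following. For all but $o(n)$ pairs, and for every pair of sets $A\subseteq O_n$, $B\subseteq E_n$ of linear size, the edge count satisfies $e(A,B)=(1+o(1))\sum_{a\in A,b\in B}\one_{a+b\ \mathrm{prime}}$. This is close to the model density $\approx 2/\log(a+b)$ times $\abs{A}\abs{B}$, up to a local-arithmetic correction. More usefully, we aim for a discrepancy statement: for all $A,B$ with $\abs{A},\abs{B}\ge\varepsilon n$, we have $e(A,B)\ge c\abs{A}\abs{B}/\log n$. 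We expect to prove this by writing $e(A,B)=\sum_p r_{A+B}(p)$ and applying the large sieve or Fourier analysis over $\mathbb Z/N$ with a Selberg/GPY-type majorant, as in Green's transference. This yields a \emph{sparse} robust expansion.

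\textbf{Step 2 (Hamiltonicity in sparse expanders).} We will then apply a sparse analogue of the robust-expander theorem, proved via regularity for sparse pseudorandom graphs or via the absorption method. The absorption route goes as follows. Reserve a small random set $R$. Build an absorbing path using short gadgets, which exist since any two vertices of the same part have many common second neighbours by Step 1. Cover $\Gamma_n-R$ by few long paths using a perfect matching of Greenfield--Greenfield type combined with expansion, in the sense of a path cover via Pósa rotation–extension. Connect the paths through $R$ using expansion. Finally, absorb the leftover vertices.

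\textbf{Main obstacle.} Step 1 needs uniformity for \emph{all} linear-size $A,B$, not just intervals, and small vertices behave differently. For instance, $x=1$ only connects to $y$ with $y+1$ prime, but its degree is still $\asymp n/\log n$, which suffices. We would first try a $W$-trick: restrict to residues coprime to a slowly growing $W$ and use the transference principle. This makes the normalized prime indicator pseudorandom enough that $e(A,B)$ is controlled by the density of $A\times B$ under the map $(a,b)\mapsto a+b$. Separately, we need $\abs{A+B}$-type bounds, so that $A+B$ meets enough primes; these would come from Plünnecke and Cauchy--Davenport-type arguments over intervals. We expect this uniform lower bound, rather than the combinatorial Step 2, to be the hardest part.
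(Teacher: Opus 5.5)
\textbf{Gap 1: the Step~1 lemma is false.} Take $A=\{a\in O_n:a\equiv1\pmod{3}\}$ and $B=\{b\in E_n:b\equiv2\pmod{3}\}$. Both have about $n/3$ elements, but every $a+b$ is a multiple of $3$, so $e_{\Gamma_n}(A,B)\le1$. Hence no bound $e(A,B)\ge c\abs A\abs B/\log n$ can hold for all pairs of linear size, whatever transference machinery you use. Sumset bounds of Pl\"unnecke or Cauchy--Davenport type cannot repair this: here $A+B$ is large (about $2n/3$ elements) but contains only the prime $3$.

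After the $W$-trick, $e(A,B)$ is governed by $\sum_{a\in A,b\in B}\Lambda_{\Cr,w}(a+b)$. That is, it depends on how $A$ and $B$ distribute among residues modulo $Q=P(w)/2$, and this local count can vanish. The missing idea is that robust expansion only needs the lower bound for \emph{nearly complementary} pairs. If $S$ fails to expand, then $A$ comes from $S$ and $B$ from the vertices outside its robust neighbourhood, so $\abs A+\abs B\ge(1-\nu)n$. For such pairs the paper bounds the local count from below via the spectral gap of the matrix $(\varepsilon_Q(i+j))$. Its nontrivial singular values are Ramanujan sums of size at most $\varphi(Q)/2$ (Lemmas~\ref{lem:local-spectrum} and~\ref{lem:local-density}). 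Only then does the $U^2$ estimate of Theorem~\ref{thm:prime-uniformity} transfer the bound to primes.

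\textbf{Gap 2: the small-set regime is missing.} You only treat sets of size at least $\varepsilon n$ with $\varepsilon$ fixed. Since all degrees are about $2n/\log n$, robust-expander Hamiltonicity theorems need $\tau\lesssim\eta=O(1/\log n)$ (in Theorem~\ref{thm:lo-patel}, $\tau\le\eta/16$). The P\'osa rotation--extension and absorption arguments of your Step~2 likewise need expansion of sets of size between roughly $n/\log n$ and $\varepsilon n$. In that range the transference error $N(\log N)^{-c}\sqrt{\abs A\abs B}$ of Lemma~\ref{lem:prime-bilinear}, with $c$ tiny and ineffective, exceeds the main term $\asymp\abs A\abs B$. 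For $\abs A\approx n/\log n$ and $\abs B\approx n$ you would need $c>1/2$.

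The paper handles this regime with an \emph{upper} bound. The sieve bound of Theorem~\ref{thm:prime-tuple} gives $\codeg(x,x')\ll n(\log\log n)^2/\log^2n$. A Cauchy--Schwarz second-moment argument then shows that the edges leaving a set $X$ with $\abs X\le\rho_0 n/(\log\log n)^2$ cannot concentrate on $2\abs X$ vertices (Lemma~\ref{lem:prime-small-concentration}). Your proposal has no ingredient of this kind; the common-second-neighbour lower bounds you mention point in the wrong direction.

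\textbf{Step~2 need not be built from scratch.} Contract a perfect matching $M$ into the digraph $D_M$ on $[n]$; its directed Hamilton cycles lift to Hamilton cycles of $\Gamma_n$. This lets you apply Lo and Patel's theorem for sparse robust outexpanders directly, with $\eta=1/(2\log n)$, $\tau=\eta/16$ and $\nu=\kappa/\log^2 n$.
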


Let $q$ be a prime power and $\F_q$ be the finite field of $q$ elements. Write $\mathcal P_q$ for the set of primitive
elements of $\F_q$, that is, the generators of the multiplicative group $\F_q^*=\F_q\setminus\{0\}$. Define the graph
$H_q$ on $\F_q$ by
\[
  xy\in E(H_q)
  \quad\Longleftrightarrow\quad
  x\ne y\ \text{ and }\ x+y\in\mathcal P_q.
\]
For an odd prime $p=2m+1$, define a graph $H_p^+$ and a digraph $H_p^-$,
both on $[m]$, by
\[
  xy\in E(H_p^+)
  \quad\Longleftrightarrow\quad
  x\ne y\ \text{ and }\ x+y\in\mathcal P_p,
\]
and
\[
  xy\in E(H_p^-)
  \quad\Longleftrightarrow\quad
  x-y\in\mathcal P_p.
\]
Sun and Hou \cite{Sun} conjectured that these three graphs are Hamiltonian,
apart from a finite list of small fields. Our second result is as follows.

\begin{theorem}\label{thm:primitive-main}
The following statements hold.
\begin{enumerate}
  \item If $q>18\,888\,871$ is a prime power, then $H_q$ is Hamiltonian.
  \item If $p>562\,582\,021$ is prime, then $H_p^+$ is Hamiltonian.
  \item If $p>3.661\times10^{52}$ is prime, then $H_p^-$ contains a
  directed Hamilton cycle.
\end{enumerate}
\end{theorem}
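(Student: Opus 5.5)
The plan is to deduce all three parts from one Hamiltonicity criterion for dense, pseudorandom (di)graphs, verified by character-sum estimates. The criterion has the Chvátal--Erdős or Pósa type: a graph on $N$ vertices with minimum degree $\delta$ in which every two disjoint sets of size at least $k$ are joined by an edge, with $k$ small compared with $\delta$ (for example $\delta\ge k+\alpha$ with $\alpha$ the independence number bounded by $k$), is Hamiltonian. I would use the explicit Chvátal--Erdős statement: if the independence number is at most the connectivity, the graph is Hamiltonian. For digraphs I would use an explicit robust-expander or Ghouila-Houri type criterion. Each criterion will be proved or cited with explicit constants, because the thresholds $18\,888\,871$ and so on are clearly produced by numerical optimisation.

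\textbf{Pseudorandomness.} I would write the indicator of $\mathcal P_q$ by Vinogradov's formula,
\[
\one_{\mathcal P_q}(z)=\frac{\varphi(q-1)}{q-1}\sum_{d\mid q-1}\frac{\mu(d)}{\varphi(d)}\sum_{\operatorname{ord}\chi=d}\chi(z).
\]
For sets $A,B\subseteq\F_q$, the number of pairs with $a+b\in\mathcal P_q$ is then
\[
\frac{\varphi(q-1)}{q-1}\abs A\abs B\;+\;O\!\left(\frac{\varphi(q-1)}{q-1}\,2^{\omega(q-1)}\sqrt{q\abs A\abs B}\right),
\]
using the standard bilinear bound $\abs{\sum_{a,b}\chi(a+b)}\le\sqrt{q\abs A\abs B}$. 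The same expansion with $A=\{x\}$ and $B$ an interval, or all of $\F_q$, gives the degrees. For $H_q$ the degree of $x$ is exactly $\varphi(q-1)$ minus possibly one, coming from $2x\in\mathcal P_q$, so it is about $\varphi(q-1)$. For $H_p^{\pm}$, where $x,y\in[m]$, the degrees need incomplete character sums over the interval $[m]$. Here I would use the Pólya--Vinogradov inequality, with explicit constants, summed over the $2^{\omega(p-1)}$ squarefree divisors. To beat the factor $2^{\omega(q-1)}$ I would use a Cohen-type sieve: restrict to characters whose order divides the radical of a core set of small primes, and treat the remaining large primes $\ell\mid q-1$ as sieving primes with weight $1-2\sum 1/\ell$. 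This is what makes the bound numerically feasible.

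\textbf{Deduction.} Let $\theta=\varphi(q-1)/(q-1)$. An independent set $A$ of size $k$ forces $\theta k^2\lesssim 2^{\omega}\sqrt{q}\,k$, so $\alpha\ll 2^{\omega}\sqrt q/\theta$. Connectivity is at least about $\delta-\alpha$, by the same edge-count argument between the two sides of a separator. Hence $\alpha\le\kappa$ whenever $\theta q\gg 2^{\omega}\sqrt q/\theta$, which holds for $q$ beyond an explicit bound. Below that bound, finitely many $q$ with large $\omega(q-1)$ remain, and I would handle them with the sieved estimate or by computer. The improvement to $p>61$ for the full prime field presumably comes from a direct computation combined with sharper estimates. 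For $H_p^-$ the digraph is oriented, since $x-y$ and $y-x$ differ by the factor $-1$, and $-1$ is a square when $p\equiv1\pmod 4$. A directed criterion, such as robust out-expansion with explicit constants, costs much more, which would explain the $10^{52}$ threshold.

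\textbf{Main obstacle.} The hardest step is making the Hamiltonicity criterion fully explicit for $H_p^+$ and $H_p^-$. There the degrees on $[m]$ fluctuate by Pólya--Vinogradov error terms of size $2^{\omega}\sqrt p\log p$. The digraph case in addition has no Chvátal--Erdős analogue, so I would first try an explicit absorbing or rotation-extension argument based on the expansion estimate above.
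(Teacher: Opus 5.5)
Your outline follows the paper's argument. The Vinogradov expansion of $\one_{\mathcal P_q}$ with square-root character-sum bounds gives $\bigl|E^{\pm}(X,Y)-\frac{\varphi(q-1)}{q}\abs X\abs Y\bigr|\le\Lambda_q\sqrt{\abs X\abs Y}$, where $\Lambda_q=\frac{\varphi(q-1)}{q-1}W(q-1)\sqrt q$; your bilinear bound and the paper's Fourier/Gauss-sum lemma are interchangeable here. This bounds $\alpha$, and via a smallest component of $H-S$ also $\kappa$. Chv\'atal--Erd\H{o}s then settles (1) and (2), with explicit P\'olya--Vinogradov controlling degrees on $[m]$, and (3) is robust out-expansion fed into an explicit Hamiltonicity theorem. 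Where you depart from this, you make things harder than necessary.

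\emph{The dropped factor $\theta$.} Your deduction loses the $\theta$ that your own display carries in the error term. An independent set of size $k$ gives $\theta k^2\lesssim\theta\,2^{\omega}\sqrt q\,k$, so $\alpha\lesssim 2^{\omega}\sqrt q$, not $2^{\omega}\sqrt q/\theta$. With the factor kept, the paper's bookkeeping turns $\alpha\le\kappa$ into $\varphi(q-1)\ge\frac{3q}{q-1}W(q-1)\sqrt q+1$. (That bookkeeping splits an independent set into two halves and bounds the smallest component of $H_q-S$ by the same no-edge estimate.) This unsieved inequality holds for every $q>18\,888\,871$ by a hand case analysis on $\omega(q-1)$ using primorial lower bounds for $\varphi(N)/\sqrt N$. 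The stated threshold is exactly where it becomes true, so no Cohen sieve or computer search is needed; part (2) is analogous. The spurious $1/\theta$ inflates the threshold by a factor of order $\theta^{-2}$, which is why you expected a leftover range.

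\emph{Part (3).} You do not need a home-made absorbing or rotation--extension argument. Ghouila-Houri is out of reach anyway: $\delta^0(H_p^-)$ is only about $\frac{\varphi(p-1)}{p-1}m$, which is at most roughly $m/3$ unless $p$ is a Fermat prime. The paper instead cites Lo--Patel (Theorem~\ref{thm:lo-patel}). It says an $N$-vertex robust $(\nu,\tau)$-outexpander with $\delta^0>\eta N$ has a directed Hamilton cycle whenever $4(\log^2N/N)^{1/13}<\nu\le\tau\le\eta/16<1/16$. The paper takes $\sigma=9999/10000$ and $\eta=\sigma\varphi(p-1)/p$, $\tau=\eta/16$, $\nu=\eta^2/16$. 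Expansion then follows from the $E^-$ mixing bound, and the semidegree bound from P\'olya--Vinogradov on the intervals $i-[m]$ and $[m]-i$. The requirement $\nu>4(\log^2m/m)^{1/13}$ is what produces $3.661\times10^{52}$, so a different criterion would not reproduce that number.

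The $p>61$ remark is not a sharpening of these estimates. It is an explicit ordering built from an $x$ with $x,x+1,x+2$ all primitive.
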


\begin{remark}
For $H_p$, a direct construction gives a stronger result. In fact, by Cohen, Oliveira e Silva and Trudgian
\cite[Theorem~1]{CohenOliveiraTrudgian}, every prime $p>61$ has an element
$x\in\F_p$ for which $x,x+1,x+2$ are primitive. Put $m=(p-1)/2$ and
define
\[
  a_{2k+1}=\frac x2+2k+1\quad(0\leq k\leq m),
\]
and
\[
  a_{2k}=\frac x2-2k+1\quad(1\leq k\leq m).
\]
One may easily verify that this ordering is a Hamilton cycle of $H_p$.
\end{remark}

\section{Graph-theoretic and analytic tools for the prime-circle problem}
\label{sec:graph-inputs}

For a digraph $D$ on $N$ vertices, a set
$S\subseteq V(D)$ and a real number $\nu>0$, define the robust outneighborhood
\[
  \RN^+_{\nu,D}(S)
  =\{v\in V(D):\abs{N_D^-(v)\cap S}\geq\nu N\},
\]
where $N_D^-(v)$ denotes the inneighborhood of $v$. The digraph is a \emph{robust $(\nu,\tau)$-outexpander}, where $0<\nu\leq\tau<1$ if
\[
  \abs{\RN^+_{\nu,D}(S)}>\abs S+\nu N
\]
whenever $\tau N\leq\abs S\leq(1-\tau)N$. The minimum semidegree of $D$ is
\[
  \delta^0(D)=\min\{\delta^+(D),\delta^-(D)\},
\]
where $\delta^+(D)$ and $\delta^-(D)$ are respectively the minimum outdegree and minimum indegree of $D$.

We use the following theorem of Lo and Patel
\cite[Theorem~3]{LoPatel}.

\begin{theorem}\label{thm:lo-patel}
Let $N$ be a positive integer and let $\nu,\tau,\eta\in(0,1)$ satisfy
\[
 4\left(\frac{\log^2N}{N}\right)^{1/13}<\nu\leq\tau\leq\frac{\eta}{16}<\frac1{16}.
\]
Let $D$ be an $N$-vertex digraph with $\delta^0(D)>\eta N$ which is a robust $(\nu,\tau)$-outexpander. Then, for every $\nu N/2\leq\ell\leq N$ and every vertex $v$, the digraph $D$ contains a directed cycle of length $\ell$ through $v$.
\end{theorem}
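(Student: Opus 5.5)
This theorem is quoted from Lo and Patel \cite[Theorem~3]{LoPatel}, and in the paper it is used as a black box. If I had to prove it myself, I would reduce everything to finding a Hamilton cycle in a suitable induced subdigraph. To get a cycle of length $\ell$ through $v$, I would choose a set $W\ni v$ of size exactly $\ell$ and show that $D[W]$ is Hamiltonian.

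\textbf{Step 1: choose $W$ and check inheritance.}
\begin{itemize}
\item For $\ell$ close to $N$, take $W$ to be $V(D)$ with a few vertices deleted.
\item For smaller $\ell$, take $W$ to be $v$ together with a uniformly random set of $\ell-1$ other vertices.
\item By Chernoff bounds and a union bound over vertices, every vertex keeps semidegree about $\eta\ell$ inside $W$.
\item Similarly, every $x\in\RN^+_{\nu,D}(S)$ keeps about $\nu\ell$ inneighbours in $S\cap W$, for every relevant $S$.
\end{itemize}
So $D[W]$ should be a robust $(\nu',\tau')$-outexpander with $\nu'\approx\nu^2$ (say) and $\tau'\approx\tau$.

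Two points need care here. Concentration for all exponentially many sets $S$ cannot come from a naive union bound. Instead I would test expansion only on "robust" witnesses: every vertex outside $\RN^+(S)$ already has few inneighbours in $S$, which is a degree condition. The hypothesis $\ell\geq\nu N/2$ and the lower bound $\nu>4(\log^2N/N)^{1/13}$ are exactly what should make all these error terms negligible. I expect the exponent $1/13$ to come from iterating this loss several times in the later steps.

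\textbf{Step 2: Hamiltonicity of $D[W]$ without the regularity lemma.} Set $n=\abs W$. I would carry out four substeps in this order.
\begin{enumerate}
\item \emph{Short connecting paths.} Robust outexpansion together with $\delta^0>\eta n$ gives, by iterating $S\mapsto\RN^+(S)$, a path of length $O(1/\nu)$ between any ordered pair of vertices. Moreover, such a path can be routed inside a small random reserve set $R$ that is set aside in advance.
\item \emph{Cycle cover.} Hall's condition for the bipartite double cover of $D[W\setminus R]$ follows from min semidegree when $\abs S<\tau n$ or $\abs S>(1-\tau)n$, and from expansion otherwise. A perfect matching there is a $1$-factor, that is, a spanning collection of vertex-disjoint cycles.
\item \emph{Merging.} Cycles are merged two at a time using connecting paths through $R$, via the shifted-walk technique of K\"uhn--Osthus--Treglown. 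There are at most $O(1/\nu)$ short cycles after a preliminary merging, since long cycles can be glued in bulk.
\item \emph{Absorbing.} The unused vertices of $R$ are absorbed by a small absorbing structure prepared beforehand. For each vertex there should be many short gadgets in which that vertex can be inserted or bypassed.
\end{enumerate}

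\textbf{Main obstacle.} I expect the hard part to be the merging and absorbing in Step 2 with only polynomially small $\nu$. Two things are lost along the way:
\begin{itemize}
\item each merge consumes $O(1/\nu)$ reserve vertices;
\item the absorber must handle all leftover vertices while the graph's expansion parameters degrade.
\end{itemize}
Keeping the total loss below $\nu n$ is where the quantitative condition on $\nu$ enters. My first attempt at controlling it would be to set $\abs R\approx\nu^3 n$ and bound the number of merge rounds by $1/\tau$.
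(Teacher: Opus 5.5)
\textbf{There is a genuine gap.} The paper gives no proof of this statement. It is imported from Lo and Patel~\cite[Theorem~3]{LoPatel} and used only as a black box, which is what your first sentence says and is the appropriate treatment here. The argument you sketch beyond that is not a proof, and its reduction in Step~1 breaks down.

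Deleting up to about $\nu N/2$ vertices is harmless. For a random $W$, however, you need robust expansion of $D[W]$ for all $2^{\ell}$ sets $S'\subseteq W$, and these are chosen \emph{after} $W$.
\begin{itemize}
\item For $\ell<\tau N$, which is always part of the range since $\nu\le\tau$, every such $S'$ has fewer than $\tau N$ vertices. The outexpansion hypothesis on $D$ therefore says nothing about it. For $\ell<\nu N$ one even has $\RN^+_{\nu,D}(S')=\emptyset$, so there is nothing for $D[W]$ to inherit directly.
\item Expansion of $D[W]$ would instead have to be deduced from expansion of large sets of $D$, uniformly over all these $S'$. For a vertex whose degree into $S$ is near the threshold $\nu N$, the Chernoff bound gives failure probability no better than $\exp(-O(\nu\ell))$. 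A union bound over $2^{\ell}$ sets, let alone $2^N$, therefore fails by an exponential factor. Testing only ``robust witnesses'' does not reduce the number of sets.
\item Even granting inheritance with $\nu'\approx\nu^2$, take $\nu$ near the threshold. Then $\nu'\approx16(\log^2N/N)^{2/13}$, which is far below $4(\log^2\ell/\ell)^{1/13}$ for every $\ell\le N$. At $\ell\approx\nu N/2$, even $\nu$ itself lies below $4(\log^2\ell/\ell)^{1/13}$.
\end{itemize}
So passing to an induced subdigraph asks Step~2 for a Hamiltonicity theorem strictly stronger than the case $\ell=N$ of the statement. This suggests building the cycle of length $\ell$ through $v$ inside $D$, with the whole digraph available for connecting and absorbing.

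Step~2 is the entire content of the Lo--Patel theorem: Hamiltonicity of robust outexpanders at polynomially small $\nu$, without the regularity lemma. Your outline lists techniques rather than supplying this argument. Two pieces are fine. The Hall-condition proof of a $1$-factor is correct: semidegree handles $\abs S<\tau n$ and $\abs S>(1-\tau)n$, and expansion handles the middle range. Connecting paths of length $O(1/\nu)$ are standard. The remaining pieces are not established.
\begin{itemize}
\item \emph{Merging.} A $1$-factor may consist of $\Theta(n)$ cycles, for example $2$-cycles, while each merge costs $O(1/\nu)$ reserve vertices. The claim that a ``preliminary merging'' leaves only $O(1/\nu)$ short cycles is exactly what needs proof.
\item \emph{Shifted walks.} The K\"uhn--Osthus--Treglown shifted walks are performed in the reduced digraph of a regularity partition. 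That is the tool you are trying to avoid, and you do not say what replaces it.
\item \emph{Absorbing.} The absorber is unspecified, and the naive gadget need not exist. By this I mean inserting $x$ into an edge $ab$ with $a\in N^-(x)$ and $b\in N^+(x)$. Let $k\ge7$ be prime and blow up the Cayley digraph of $\mathbb Z_k$ with connection set $\{1,3\}$ into parts of size $N/k$. By Cauchy--Davenport this is a robust $(\nu,\tau)$-outexpander with $\delta^0=2N/k$. For large $N$ it satisfies all the hypotheses with $\eta=1/k$, $\tau\le1/(16k)$ and $\nu<\tau/k$. Yet no vertex has an edge from its inneighbourhood to its outneighbourhood.
\item \emph{Quantitative bookkeeping.} None is done, so neither the threshold $4(\log^2N/N)^{1/13}$ nor the range $\ell\ge\nu N/2$ is actually derived.
\end{itemize}
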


We next record the three number-theoretic lemmas used for prime circles.
The first gives a uniform estimate for the vertex degrees of $\Gamma_n$.

\begin{lemma}\label{lem:prime-degrees}
Uniformly for $v\in V(\Gamma_n)$,
\[
  d_{\Gamma_n}(v)=(2+o(1))\frac n{\log n}.
\]
\end{lemma}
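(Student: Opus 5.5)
Fix $v\in[2n]$. Apart from the single exceptional sum $1+1$, every neighbour $u$ of $v$ satisfies $u\ne v$ and $u+v=p$ with $p$ prime. So $d_{\Gamma_n}(v)$ equals the number of primes $p$ in the interval $(v,\,v+2n]$, minus the at most one prime $p=2v$, which would force $u=v$. We therefore count primes in windows of length $2n$ whose left endpoint $v$ lies in $[1,2n]$.

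Write $\pi$ for the prime counting function. Then
\[
  d_{\Gamma_n}(v)=\pi(v+2n)-\pi(v)+O(1).
\]
By the prime number theorem, $\pi(x)=\frac{x}{\log x}+O\!\left(\frac{x}{\log^2 x}\right)$ uniformly for $x\ge2$. We have $v+2n\in[2n+1,4n]$, so $\log(v+2n)=\log n+O(1)$ uniformly in $v$. Hence
\[
  \pi(v+2n)=\frac{v+2n}{\log n}+O\!\left(\frac{n}{\log^2 n}\right).
\]
For $\pi(v)$ we split into two ranges.
\begin{itemize}
  \item If $v\ge n/\log^2 n$, then $\log v=\log n+O(\log\log n)$, and so
  \[
    \pi(v)=\frac{v}{\log n}+O\!\left(\frac{n\log\log n}{\log^2 n}\right).
  \]
  \item If $v< n/\log^2 n$, then both $\pi(v)$ and $\frac{v}{\log n}$ are $O\!\left(\frac{n}{\log^2 n}\right)$, so the same estimate for $\pi(v)$ holds trivially.
\end{itemize}
Subtracting the two estimates, the terms $v/\log n$ cancel, giving
\[
  d_{\Gamma_n}(v)=\frac{2n}{\log n}+O\!\left(\frac{n\log\log n}{\log^2 n}\right)=(2+o(1))\frac{n}{\log n},
\]
with an error term independent of $v$.

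The only delicate point is uniformity for small $v$, where $\log v$ is not comparable to $\log n$. The case split handles this, since primes below $v$ then contribute only $O(n/\log^2 n)$. No short-interval prime results are needed, because every window has length $2n\asymp v+2n$.
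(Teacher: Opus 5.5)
Your proof is correct and follows the paper's argument. Both reduce the degree to $\pi(v+2n)-\pi(v)+O(1)$ and split at $v\approx n/\log^2 n$, as the paper does with $A=n/(\log n)^2$; the only difference is that you track an explicit error $O(n\log\log n/\log^2 n)$, where the paper writes $(1+o(1))$ factors.
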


\begin{proof}
As usual, $\pi(x)$ counts the number of primes not exceeding $x$. The prime number theorem gives
$$
\pi(x)=\frac{x}{\log x}+O\left(\frac{x}{\log^2 x}\right).
$$
For each $v\in V(\Gamma_n)$, the neighbors of $v$ correspond to the primes in $(v,v+2n]\setminus\{2v\}$. Thus
\[
  d_{\Gamma_n}(v)=\pi(v+2n)-\pi(v)+O(1).
\]
Put $A=n/(\log n)^2$.  If $v<A$, then
\[
  \pi(v)\leq v=o\left(\frac{n}{\log n}\right),
\]
while
\[
  \pi(v+2n)=(1+o(1))\frac{v+2n}{\log (v+2n)}=(2+o(1))\frac{n}{\log n}.
\]
If $v\geq A$, note that
\[
  \log t=(1+o(1))\log n
  \qquad (A\leq t\leq4n).
\]
Hence we obtain
\[
  \pi(v+2n)-\pi(v)=(1+o(1))\frac{v+2n}{\log n}-(1+o(1))\frac{v}{\log n}
  =(2+o(1))\frac{n}{\log n}.
\]
Thus the estimate holds uniformly for $1\leq v\leq2n$. 
\end{proof}

We use the upper bound sieve for prime tuples; see,
for example, \cite[Theorem 5.7]{HalberstamRichert}.

\begin{theorem}\label{thm:prime-tuple}
Let $g$ be a positive integer, and let $a_i,b_i(i=1,\ldots,g)$ be integers satisfying
$$
E:=\prod_{i=1}^ga_i\prod_{1\leq r<s\leq g}(a_rb_s-a_sb_r)\neq 0.
$$
Let $\rho(p)$ denote the number of solutions of
$$
\prod_{i=1}^g(a_ik+b_i)\equiv 0\pmod{p},
$$
and suppose that
$$
\rho(p)<p \text{ for all } p. 
$$
Then the number of integers $k$ in an interval of length $y$ for which all the $a_ik+b_i$ are prime is at most
\begin{align*}
&2^gg!\prod_{p}\left(1-\frac{\rho(p)}{p}\right)\left(1-\frac{1}{p}\right)^{-g}\frac{y}{\log^g y}\\
&\times\left(1+O_g\left(\frac{\log\log 3y+\log\log 3|E|}{\log y}\right)\right).
\end{align*}
\end{theorem}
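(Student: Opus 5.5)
The plan is to derive this from Selberg's upper bound sieve of dimension $g$ applied to the polynomial values $P(k)=\prod_{i=1}^g(a_ik+b_i)$ as $k$ runs over the interval $I$ of length $y$.

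\textbf{Setting up the sieve.} Let $\mathcal A=\{P(k):k\in I\}$ and $z=y^{1/2}(\log y)^{-B}$ for a suitable constant $B=B(g)$. If all $a_ik+b_i$ are prime, then either $P(k)$ has no prime factor below $z$, or some $\abs{a_ik+b_i}$ is itself a prime $<z$. The second case occurs for at most $O_g(z)$ values of $k$, because $E\neq0$ forces every $a_i\neq0$. This contribution is negligible. Hence the count is at most $S(\mathcal A,z)+O_g(z)$.

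\textbf{Local densities.} For squarefree $d$, the Chinese remainder theorem gives $\#\{k\in I: d\mid P(k)\}=\frac{\rho(d)}{d}y+R_d$ with $\abs{R_d}\leq\rho(d)$, where $\rho$ is multiplicative. The hypothesis $\rho(p)<p$ means no prime is a fixed divisor, so the sieve is non-degenerate. For $p\nmid E$ the $g$ roots modulo $p$ are distinct and nonzero, so $\rho(p)=g$. For $p\mid E$ we only have $\rho(p)\leq g$ and $\rho(p)\leq p-1$.

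\textbf{Applying Selberg's sieve.} Selberg's $\Lambda^2$ method gives
\[
S(\mathcal A,z)\leq \frac{y}{G(z)}+\sum_{d<z^2,\,d\mid P(z)}3^{\omega(d)}\abs{R_d},
\qquad G(z)=\sum_{d<z,\ d\mid P(z)}\prod_{p\mid d}\frac{\rho(p)}{p-\rho(p)} .
\]
Since $\abs{R_d}\leq g^{\omega(d)}$, the remainder is at most $\sum_{d<z^2}(3g)^{\omega(d)}\ll z^2(\log z)^{3g-1}$. With $B$ large this is $\ll y/(\log y)^{g+1}$, which is absorbed into the error term.

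\textbf{Main term and expected obstacle.} It remains to show
\[
G(z)\geq \frac{(\log z)^g}{g!}\prod_p\Bigl(1-\frac{\rho(p)}{p}\Bigr)^{-1}\Bigl(1-\frac1p\Bigr)^{g}\Bigl(1+O_g\Bigl(\frac{\log\log3\abs E}{\log z}\Bigr)\Bigr).
\]
Combined with $(\log z)^{-g}=2^g(\log y)^{-g}(1+O(\log\log y/\log y))$, this yields the constant $2^gg!$. I would prove the bound on $G(z)$ by the standard dimension-$g$ asymptotic for $\sum_{d<z}h(d)$, where $h(p)=\rho(p)/(p-\rho(p))$, via a Rankin/Dirichlet-series comparison of $h$ with the function $g^{\omega(d)}/\varphi_g(d)$ behaving like $d_g(d)/d$. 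The main difficulty is uniformity in the primes $p\mid E$, where $\rho(p)$ may be smaller than $g$. Their effect on the comparison with $\prod_{p<z}(1-\rho(p)/p)^{-1}$ is controlled by $\sum_{p\mid E}\frac{\log p}{p}\ll\log\log3\abs E$. This is the source of the $\log\log3\abs E/\log y$ term. I would handle it by factoring $h$ at $p\mid E$ out of the Euler product, and bounding the tail of the completed Euler product beyond $z$ with Mertens' theorem.
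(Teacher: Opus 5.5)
Your proposal is correct in outline and follows the standard Halberstam--Richert argument behind the cited Theorem~5.7; the paper itself gives no proof, only the citation. The ingredients match that argument: Selberg's $\Lambda^2$ bound with $z=y^{1/2}(\log y)^{-B}$, the remainder estimate via $\abs{R_d}\le g^{\omega(d)}$, and a lower bound for $G(z)$ whose $E$-dependence enters only through $\sum_{p\mid E}\rho(p)\log p/p\ll\log\log 3\abs E$.

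The step you only sketch, the $E$-uniform lower bound for $G(z)$, should be done by positivity. Bounding a signed Dirichlet convolution in absolute value costs factors like $\prod_{p\mid E}(1+1/p)^{O(g)}$, which are unbounded in $E$. Instead, write $d=d_1d_2$ with $d_1\mid\prod_{p\mid E}p$ and $(d_2,E)=1$. Use the monotonicity bound
\[
\sum_{d_2<x,\ (d_2,E)=1}h(d_2)\ \ge\ \prod_{p\mid E,\ p>g}\Bigl(1-\frac gp\Bigr)\sum_{d_2<x,\ p\mid d_2\Rightarrow p>g}\frac{\mu^2(d_2)g^{\omega(d_2)}}{\varphi_g(d_2)},
\]
together with $(\log (z/d_1))^g\ge(\log z)^g-g(\log z)^{g-1}\log d_1$. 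This gives relative error $O_g\bigl((1+\sum_{p\mid E}\log p/p)/\log z\bigr)$. In the range where $\log\log 3\abs E$ is comparable to $\log y$, Rankin's trick gives $G(z)\gg\prod_{p<z^{c/g}}(1-\rho(p)/p)^{-1}$ instead. Finally, note that the singular series is $\gg_g1$ because $\rho(p)\le\min(g,p-1)$; this is what lets your additive errors be absorbed into the relative error term.
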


\begin{lemma}\label{lem:prime-codegree}
If $x\ne x'$ lie in the same part of $\Gamma_n$, then
\[
  \codeg_{\Gamma_n}(x,x')
  \ll \frac n{\log^2n}
  \prod_{\substack{p\mid x-x'\\ p>2}}
  \frac{p-1}{p-2}
  \ll \frac{n(\log\log n)^2}{\log^2n}.
\]
\end{lemma}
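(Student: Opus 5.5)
Write $h=x'-x\neq0$; since $x,x'$ lie in the same part, $h$ is even. A common neighbour $y$ of $x$ and $x'$ is a vertex of the other part with $x+y$ and $x'+y=(x+y)+h$ both prime. Putting $k=x+y$, we have $k\in(x,x+2n]$, so $\codeg_{\Gamma_n}(x,x')$ is at most the number of integers $k$ in an interval of length $2n$ for which $k$ and $k+h$ are both prime. We apply \cref{thm:prime-tuple} with $g=2$ and $(a_1,b_1)=(1,0)$, $(a_2,b_2)=(1,h)$. Then $E=h\neq0$, and $\abs{E}\leq 2n$.

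Next we compute $\rho(p)$. For $p=2$ the product $k(k+h)\equiv k^2\pmod 2$ has the single root $k\equiv0$, so $\rho(2)=1<2$. For odd $p\mid h$ we get $\rho(p)=1$, and for odd $p\nmid h$ we get $\rho(p)=2<p$. So the hypotheses hold. The singular series is
\[
\prod_p\Bigl(1-\frac{\rho(p)}{p}\Bigr)\Bigl(1-\frac1p\Bigr)^{-2}
=2\prod_{p>2}\Bigl(1-\frac{2}{p}\Bigr)\Bigl(1-\frac1p\Bigr)^{-2}\prod_{\substack{p\mid h\\p>2}}\frac{1-1/p}{1-2/p}.
\]
Here the middle product is twice the twin prime constant, which is finite, and the last factor is $\prod_{p\mid h,\,p>2}(p-1)/(p-2)$.

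With $y=2n$, the error factor is
\[
1+O\Bigl(\frac{\log\log 6n}{\log 2n}\Bigr)=O(1).
\]
Since $y/\log^2y\ll n/\log^2n$, this gives the first bound, uniformly in $x,x'$.

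For the second bound we show $\prod_{p\mid h,\,p>2}(p-1)/(p-2)\ll(\log\log n)^2$. Write
\[
\frac{p-1}{p-2}=\Bigl(1-\frac1{p-1}\Bigr)^{-1}\leq\Bigl(1-\frac1p\Bigr)^{-2}\Bigl(1+O\bigl(p^{-2}\bigr)\Bigr).
\]
The product of the $1+O(p^{-2})$ factors converges. It then suffices to bound $\prod_{p\mid h}(1-1/p)^{-1}=h/\varphi(h)$. Since $\abs h<2n$, Mertens' theorem gives $h/\varphi(h)\ll\log\log n$. To see this, note that $h$ has at most $O(\log n)$ prime factors, so the product is largest when they are the smallest primes, all of which are $\ll\log n$. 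Squaring gives $(\log\log n)^2$.

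The main step needing care is checking that \cref{thm:prime-tuple} applies with constants uniform in $h$. This holds because $g=2$ is fixed and $E=h$ enters only through $\log\log3\abs{E}\ll\log\log n$, so there is no real obstacle.
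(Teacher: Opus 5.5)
Your proof is correct and is essentially the paper's argument: apply Theorem~\ref{thm:prime-tuple} with $g=2$, read off the factor $\prod_{p\mid h,\,p>2}\frac{p-1}{p-2}$ from the singular series, and bound it by $(\abs h/\varphi(\abs h))^2\ll(\log\log n)^2$. The only difference is parametrization. You use $k=x+y$ with $a_i=1$, so $\rho(2)=1$ and the interval has length $2n$. The paper indexes the common neighbour with $a_i=2$, so $\rho(2)=0$ and $E=8(x'-x)$. This difference is immaterial.

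Two small tidy-ups. First, $\frac{p-1}{p-2}\le(1-1/p)^{-2}$ holds exactly for $p\ge3$, so no $1+O(p^{-2})$ correction is needed. Second, the smallest $\omega(h)$ primes are $\ll\log n$ because their product is at most $\abs h<2n$; the fact $\omega(h)=O(\log n)$ alone does not give this.
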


\begin{proof}
Take $g=2$, $1\leq k\leq n$, $a_1=a_2=2$, $b_1=x+\delta$ and $b_2=x'+\delta$, where $\delta=0$ or $-1$ according as $x\in O_n$ or $x\in E_n$. Then 
$$
E=8(x'-x)\neq 0.
$$
Moreover, 
\begin{equation*}
\rho(2)=0, \qquad \rho(p)=\begin{cases} 1,\quad p\mid x'-x,\\2,\quad p\nmid x'-x\end{cases} ~ (p>2).
\end{equation*}
Consequently, Theorem \ref{thm:prime-tuple} gives the
first estimate. Since $\abs{x-x'}\leq2n$, the second follows from
\[
 \prod_{\substack{p\mid x-x'\\ p>2}}
 \frac{p-1}{p-2}
 \leq
 \left(\frac{\abs{x-x'}}{\varphi(\abs{x-x'})}\right)^2
 \ll(\log\log n)^2,
\]
where $\varphi$ denotes the Euler totient function.
\end{proof}

The von Mangoldt function $\Lambda(m)$ is defined for positive integers $m$ by
\begin{equation*}
\Lambda(m)=\begin{cases} \log p, &\mbox{if $m=p^k$ for some prime $p$ and integer $k\geq1$,}\\ 0,&\mbox{otherwise.}\end{cases}
\end{equation*}
For $w\geq2$, let
\begin{equation*}
  P(w)=\prod_{p<w}p,
  \qquad
  \Lambda_{\Cr,w}(m)
  =\begin{cases} \frac{P(w)}{\varphi(P(w))} &\text{if $(m,P(w))=1$,}\\ 0 &\text{otherwise.}\end{cases}
\end{equation*}
For a function $f\colon\mathbb Z\to\mathbb C$ and a positive integer $N$, the local Gowers uniformity norm $\norm{f}_{U^2[N]}$ is defined by
\[
  \left(
  \mathbb E_{\substack{n,h_1,h_2\in\mathbb Z\\
    n,n+h_1,n+h_2,n+h_1+h_2\in[N]}}
  f(n)\overline{f(n+h_1)}
  \overline{f(n+h_2)}f(n+h_1+h_2)
  \right)^{1/4},
\]
where $\mathbb E$ denotes the average over all triples satisfying the
displayed conditions.

The following is the von Mangoldt-function part of the uniformity
estimate of Tao and Ter\"av\"ainen
\cite[Theorem~1.3(ii)]{TaoTeravainen}.

\begin{theorem}\label{thm:prime-uniformity}
There is an absolute constant $c_0>0$ such that, for all sufficiently
large $N$ and every
\[
  2\leq w\leq\exp\bigl((\log N)^{1/10}\bigr),
\]
we have
\[
  \norm{\Lambda-\Lambda_{\Cr,w}}_{U^2[N]}
  \ll_{\mathrm{ineff}}
  (\log N)^{-c_0}+w^{-c_0}.
\]
The subscript $\mathrm{ineff}$ means the implied constants are permitted to be ineffective.
\end{theorem}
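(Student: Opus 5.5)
The statement is Theorem~3 (part (ii)) of \cite{TaoTeravainen}, quoted here for later use, so in the paper it suffices to cite it. If I were to prove it directly, I would use the fact that for $U^2$ the problem reduces to Fourier analysis; no inverse theorem is needed.

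\emph{Step 1: reduction to exponential sums.} Put $f=(\Lambda-\Lambda_{\Cr,w})\one_{[N]}$. The local norm $\norm{f}_{U^2[N]}^4$ is, up to absolute constants, $N^{-3}\int_0^1\abs{\hat f(\alpha)}^4\dd\alpha$, where $\hat f(\alpha)=\sum_n f(n)\ee(n\alpha)$. The trivial Parseval bound loses a factor $\log N$, because $\sum_{n\leq N}\Lambda(n)^2\sim N\log N$. Instead I would use a restriction estimate of Green type,
\[
\int_0^1\abs{\hat f}^{3}\dd\alpha\ll N^{2}.
\]
For $\Lambda$ this is Green's estimate. For $\Lambda_{\Cr,w}$ I would run the same majorant argument, since $\Lambda_{\Cr,w}$ is itself a bounded-density sieve-type weight. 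This gives
\[
\norm f_{U^2[N]}^4\ll N^{-1}\sup_\alpha\abs{\hat f(\alpha)},
\]
so it suffices to show $\sup_\alpha\abs{\hat f(\alpha)}\ll N\bigl((\log N)^{-c}+w^{-c}\bigr)$.

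\emph{Step 2: major arcs.} Let $\alpha=a/q+\beta$ with $q\leq(\log N)^B$ and $\abs\beta\leq(\log N)^B/N$. Siegel--Walfisz (the source of ineffectivity) gives
\[
\sum_{n\leq N}\Lambda(n)\ee(n\alpha)=\frac{\mu(q)}{\varphi(q)}\sum_{n\leq N}\ee(n\beta)+O\bigl(N(\log N)^{-A}\bigr).
\]
For $\Lambda_{\Cr,w}$, which is periodic modulo $P(w)$, I would compute the Ramanujan-type sums exactly. One gets the same main term $\frac{\mu(q)}{\varphi(q)}\sum\ee(n\beta)$ when $q\mid P(w)$, and $0$ when $q$ has a prime factor $\geq w$. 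Only a small error remains, which I would control by partial summation against $\ee(n\beta)$. In the mismatched case the $\Lambda$ term is $\ll N/\varphi(q)\ll N w^{-1+\varepsilon}$, because such $q$ has a prime factor $\geq w$. This is where the $w^{-c_0}$ loss arises.

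\emph{Step 3: minor arcs.} On the minor arcs, Vinogradov's bound (via Vaughan's identity) gives $\sum\Lambda(n)\ee(n\alpha)\ll N(\log N)^{-A}$. For $\Lambda_{\Cr,w}$ I would write
\[
\Lambda_{\Cr,w}(n)=\frac{P(w)}{\varphi(P(w))}\sum_{d\mid(n,P(w))}\mu(d).
\]
Using $w\leq\exp((\log N)^{1/10})$, the fundamental lemma of sieve theory lets me replace $\mu$ by a sieve weight supported on $d\leq N^{1/2}$, at cost $O(N(\log N)^{-A})$ in $L^1$. The resulting sum is then $\ll\sum_{d\leq N^{1/2}}\min(N/d,\norm{d\alpha}^{-1})$, which is small by the standard Dirichlet-approximation bound when $q>(\log N)^B$.

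I expect the main obstacle to be the restriction estimate in Step~1 for the combined function, uniformly in $w$; the Fourier bounds themselves are classical. If that estimate proves awkward, I would instead use the Cauchy--Schwarz reduction
\[
\norm{f}_{U^2}^4\leq\norm{\hat f}_\infty^2\,\norm{f}_2^2\,N^{-3},
\]
accept a factor of $\log N$, and compensate with a stronger minor-arc saving together with a dense-model/majorant argument.
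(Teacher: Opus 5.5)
Your argument is correct, and it takes a genuinely different route from the paper. The paper gives no proof: it quotes Tao--Ter\"av\"ainen, as their Theorem~1.3(ii) rather than Theorem~3. You instead give the classical circle-method proof of the $U^2$ case:
\begin{itemize}
\item Siegel--Walfisz on the major arcs, where the mismatch at denominators $q$ with a prime factor $\geq w$ produces the $w^{-1}$ term;
\item Vinogradov together with the fundamental lemma on the minor arcs, which is exactly where $w\leq\exp((\log N)^{1/10})$ enters;
\item an $L^3$ restriction bound to pass from $\sup_\alpha\abs{\widehat f(\alpha)}$ to $\norm f_{U^2[N]}$.
\end{itemize}
This is self-contained up to standard inputs and gives an explicit exponent. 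It also reveals a shortcut in the paper. The theorem is used only in Lemma~\ref{lem:prime-bilinear}, and only to deduce $\sup_\theta\abs{\widehat f(\theta)}\ll N((\log N)^{-c}+w^{-c})$. Your Steps~2--3 give that directly, so the $U^2$ norm could be bypassed there entirely.

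Three points in the details need correcting.

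\textbf{The restriction estimate for $\Lambda_{\Cr,w}$ is not needed.} You call this the main obstacle, but it is not one. For $g=\Lambda_{\Cr,w}\one_{[N]}$ the trivial bound $\int_0^1\abs{\widehat g}^3\leq\norm{\widehat g}_\infty\norm g_2^2\ll N^2\log w$ already gives $\norm f_{U^2[N]}^4\ll((\log N)^{-A}+w^{-1})(1+\log w)$. The loss $\log w\leq(\log N)^{1/10}$ is absorbed because $A$ is arbitrary and $w^{-1}\log w\ll w^{-1/2}$. So only the Bourgain--Green estimate for $\Lambda$ is actually used.

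\textbf{Your fallback fails as written.} For small squarefree $q$ with a prime factor $\geq w$ one has $\abs{\widehat f(a/q)}\approx N/\varphi(q)$; for instance, when $w=3$, $\abs{\widehat f(1/3)}\approx N/2$. Hence $N^{-3}\norm{\widehat f}_\infty^2\norm f_2^2\asymp w^{-2}\log N$, and no minor-arc saving can repair a loss that occurs on the major arcs. What does work is to split $\int_0^1\abs{\widehat f}^4$:
\begin{itemize}
\item on the major arcs, insert the main terms; they contribute $\ll N^3\sum\mu^2(q)\varphi(q)^{-3}\ll N^3w^{-2}$, the sum being over $q$ with a prime factor $\geq w$;
\item on the minor arcs, use $\sup_{\mathfrak m}\abs{\widehat f}^2\norm f_2^2\ll N^3(\log N)^{1-2A}$.
\end{itemize}
This version needs no restriction theorem at all.

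\textbf{Step~2 cannot use exact periodicity for large $w$.} Once $w$ exceeds about $\log N$, one has $P(w)>N$, and $P(w)$ can be as large as $\exp(\exp((\log N)^{1/10}))$. So the exact evaluation via periodicity modulo $P(w)$ is unavailable. The count of $n\leq x$ with $n\equiv b\pmod q$ and $(n,P(w))=1$ must instead come from the fundamental lemma, exactly as in Step~3.
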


\begin{lemma}\label{lem:prime-bilinear}
There is an absolute constant $c>0$ such that the following holds for
all sufficiently large $N$. Let $X,Y\subseteq[N]$ and suppose that
$x+y\leq N$ for every $(x,y)\in X\times Y$. With
\[
  w=(\log N)^{1/100},
\]
we have
\[
  \sum_{x\in X}\sum_{y\in Y}\Lambda(x+y)
  =
  \sum_{x\in X}\sum_{y\in Y}\Lambda_{\Cr,w}(x+y)
  +O_{\mathrm{ineff}}\!\left(
    N(\log N)^{-c}\sqrt{\abs X\abs Y}
  \right).
\]
\end{lemma}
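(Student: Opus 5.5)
Write $f=\Lambda-\Lambda_{\Cr,w}$. The claim then says that the bilinear form $\sum_{x\in X}\sum_{y\in Y}f(x+y)$ is at most $N(\log N)^{-c}\sqrt{\abs X\abs Y}$ in absolute value. The plan is the standard bound for such forms via the $U^2$ norm, using Fourier analysis on $\mathbb Z$, followed by \cref{thm:prime-uniformity}.

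\textbf{Step 1: Fourier expansion.} Since $x+y\leq N$ for all $(x,y)\in X\times Y$, only the values of $f$ on $[N]$ matter, so we may replace $f$ by $f\one_{[N]}$. Writing $e(t)=\ee^{2\pi i t}$, set $\hat g(\theta)=\sum_n g(n)e(-n\theta)$ and $\hat\one_X(\theta)=\sum_{x\in X}e(x\theta)$, and similarly for $Y$. Orthogonality on $\mathbb R/\mathbb Z$ gives
\[
  \sum_{x\in X}\sum_{y\in Y}f(x+y)=\int_0^1 \widehat{f\one_{[N]}}(\theta)\,\hat\one_X(\theta)\hat\one_Y(\theta)\dd\theta .
\]
Pull out $\norm{\widehat{f\one_{[N]}}}_\infty$ and apply Cauchy--Schwarz and Parseval to the remaining integral. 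This bounds the form by $\norm{\widehat{f\one_{[N]}}}_\infty\sqrt{\abs X\abs Y}$.

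\textbf{Step 2: from the sup of the Fourier transform to $U^2[N]$.} Let $F=f\one_{[N]}$. The unnormalised identity $\int_0^1\abs{\hat F(\theta)}^4\dd\theta=\sum_{n+h_1,\,n+h_2\ldots}F(n)\overline{F(n+h_1)}\,\overline{F(n+h_2)}F(n+h_1+h_2)$ holds, and the sum on the right runs over $\asymp N^3$ admissible triples. Hence
\[
  \int_0^1\abs{\hat F}^4 \asymp N^3\norm{f}_{U^2[N]}^4 .
\]
To get a pointwise bound I will not use a crude $L^4\to L^\infty$ inequality. Instead I use the classical inequality $\norm{\hat F}_\infty^4\leq\norm{\hat F}_2^2\,\norm{\hat F}_4^4\cdot(\text{const})$. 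Equivalently, with $\theta_0$ a maximising frequency, $\norm{\hat F}_\infty^2\le \norm F_{2}^2\cdot\sup(\ldots)$; a cleaner route is to note that $\abs{\hat F(\theta_0)}^4\le N^{3}\cdot\text{(normalised)}\,\norm{F}_{U^2}^4$ times a bounded factor, obtained by expanding $\abs{\hat F(\theta_0)}^2=\sum_h\sum_n F(n)\overline{F(n+h)}e(h\theta_0)$ and applying Cauchy--Schwarz in $h$. Either way, $\norm{\hat F}_\infty\ll N\norm{f}_{U^2[N]}$ provided $\norm{F}_\infty$ and the $\ell^2$ mass are controlled. Here $\sum_{n\le N}\abs{f(n)}^2\ll N\log N$, using $\Lambda\le\log N$ and $\Lambda_{\Cr,w}\le P(w)/\varphi(P(w))\ll\log w$.

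The resulting bound has the shape $\norm{\hat F}_\infty^4\le \norm{\hat F}_2^2\max\abs{\hat F}^2$. To make it work I bound $\int\abs{\hat F}^4\ge$ (the contribution near $\theta_0$) using the near-constancy of $\hat F$ on intervals of length $\ll 1/N$; this follows since $F$ is supported on $[N]$. This gives $\abs{\hat F(\theta_0)}^4\ll N\int\abs{\hat F}^4\ll N^4\norm{f}_{U^2[N]}^4$. Strictly, near-constancy needs a smoothing argument, e.g.\ Fejér-weighting or a derivative bound $\abs{\hat F'}\ll N\sum\abs F\ll N^2\log N$, which requires care. The cleaner option is to apply Cauchy--Schwarz directly to the bilinear sum twice in the physical domain, which gives $\abs{\sum f(x+y)}^4\le\abs X^2\abs Y^2\cdot N^{\,}\cdot(\text{Gowers-type sum})$. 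I would try the physical-side double Cauchy--Schwarz first, since the support condition $x+y\le N$ makes the box constraints automatic.

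\textbf{Step 3: conclusion.} Both routes give
\[
  \Bigl\lvert\sum_{x\in X}\sum_{y\in Y}f(x+y)\Bigr\rvert\ll N\norm{f}_{U^2[N]}\sqrt{\abs X\abs Y}.
\]
The choice $w=(\log N)^{1/100}$ lies in the range allowed by \cref{thm:prime-uniformity}, which yields $\norm{f}_{U^2[N]}\ll_{\mathrm{ineff}}(\log N)^{-c_0}+(\log N)^{-c_0/100}$. Taking $c=c_0/100$ finishes the proof.

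\textbf{Main obstacle.} The main obstacle is the normalisation in Step 2: the count of admissible triples must be exactly of order $N^3$, so that the powers of $N$ come out to precisely one factor of $N$ and no stray $\log N$ factor survives. This is where I would spend most of the effort.
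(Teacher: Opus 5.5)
Your overall route is the paper's. Step~1 (Fourier expansion, then Cauchy--Schwarz and Parseval, reducing everything to $\sup_\theta\abs{\widehat F(\theta)}$) is exactly the paper's argument. The option in Step~2 of expanding $\abs{\widehat F(\theta_0)}^2$ and applying Cauchy--Schwarz in $h$ is precisely how the paper bounds that supremum, and Step~3 with $c=c_0/100$ matches.

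However, Step~2 as written is not yet a proof, because several of its displayed claims are false.
\begin{itemize}
\item The ``classical inequality'' $\norm{\widehat F}_\infty^4\ll\norm{\widehat F}_2^2\norm{\widehat F}_4^4$ is not homogeneous (degree $4$ against degree $6$).
\item The later ``shape'' $\norm{\widehat F}_\infty^4\le\norm{\widehat F}_2^2\max\abs{\widehat F}^2$ amounts, via Parseval, to $\norm{\widehat F}_\infty\le\norm F_2$. This fails for $F=\one_{[N]}$.
\item Most importantly, the bound $\abs{\widehat F(\theta_0)}^4\ll N^3\norm f_{U^2[N]}^4$ has the wrong power of $N$. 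Again $F=\one_{[N]}$ with $\theta_0=0$ refutes it.
\end{itemize}
The correct computation, which is the paper's, is
\[
  \abs{\widehat F(\theta_0)}^4
  =\Bigl\lvert\sum_{\abs h<N}\ee(h\theta_0)\sum_nF(n)\overline{F(n+h)}\Bigr\rvert^2
  \leq(2N-1)\sum_h\Bigl\lvert\sum_nF(n)\overline{F(n+h)}\Bigr\rvert^2
  \leq(2N-1)N^3\norm f_{U^2[N]}^4 .
\]
Here the middle sum is exactly the unnormalised $U^2$ sum over at most $N^3$ admissible triples. The factor $2N-1$, which is the number of shifts allowed by $\operatorname{supp}F\subseteq[N]$, supplies the missing power of $N$.

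Once this is written down, none of your other machinery is needed. You do not need control of $\norm F_\infty$ or of the $\ell^2$ mass, and you do not need near-constancy or Fej\'er smoothing. The normalisation you flag as the main obstacle requires only the trivial upper bound $N^3$ on the number of admissible triples. This suffices because the $U^2$ average is nonnegative, being $\int_0^1\abs{\widehat F}^4$ divided by that count.

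Your physical-side double Cauchy--Schwarz is also correct. It gives $\abs{\sum f(x+y)}^4\le\abs X^2\abs Y^3\sum_h\abs{\sum_nF(n)\overline{F(n+h)}}^2$, and $\abs Y\le N$ then closes the argument. But this is the same computation without the Fourier transform. So drop the incorrect inequalities and commit to the Cauchy--Schwarz-in-$h$ step with the power of $N$ counted as above.
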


\begin{proof}
Put
\[
  f=(\Lambda-\Lambda_{\Cr,w})\one_{[N]}.
\]
We write $\ee(t)=e^{2\pi i t}$ and define
\[
  \widehat f(\theta)
  =\sum_{m\in\mathbb Z}f(m)\ee(m\theta).
\]
Cauchy--Schwarz's inequality gives
\begin{align*}
  \abs{\widehat f(\theta)}^4
  &=\abs{\sum_{-N+1\leq h\leq N-1} e(h\theta)\sum_{m\in\mathbb{Z}}f(m+h)\overline{f(m)}}^2 \\
  &\leq
  (2N-1)
  \sum_{h\in\mathbb Z}
  \abs{\sum_{m\in\mathbb Z}
    f(m+h)\overline{f(m)}}^2  \\
  &\ll
  N^4\norm{\Lambda-\Lambda_{\Cr,w}}_{U^2[N]}^4.
\end{align*}
Consequently, by \cref{thm:prime-uniformity},
\[
  \sup \abs{\widehat f(\theta)}
  \ll N\norm{\Lambda-\Lambda_{\Cr,w}}_{U^2[N]}\ll_{\mathrm{ineff}}
  N\bigl((\log N)^{-c_0}+w^{-c_0}\bigr)
\]
for some constant $c_0$. For $w=(\log N)^{1/100}$, this becomes
\[
  \sup \abs{\widehat f(\theta)}
  \ll_{\mathrm{ineff}}N(\log N)^{-c},
  \qquad c=\frac{c_0}{100}.
\]
Furthermore, H\"older's inequality gives
\begin{align*}
  &\quad\left|\sum_{x\in X}\sum_{y\in Y}f(x+y)\right|\\
  &=
  \left|\int_0^1\widehat f(\theta)
  \left(\sum_{x\in X}\ee(-x\theta)\right)
  \left(\sum_{y\in Y}\ee(-y\theta)\right)\dd\theta\right|\\
  &\leq
  \sup \abs{\hat{f}(\theta)} \left(\int_{0}^1\abs{\sum_{x\in X}\ee(-x\theta)}^2d\theta\right)^{1/2} \left(\int_{0}^1\abs{\sum_{y\in Y}\ee(-y\theta)}^2d\theta\right)^{1/2}\\
  &\ll_{\mathrm{ineff}}
  N(\log N)^{-c}\sqrt{\abs X\abs Y}.
\end{align*}
The result follows from the definition of $f$.
\end{proof}

\section{Edge-count estimates for the prime-sum graph}
\label{sec:edge-count}

Recall that $N_G(v)$ denotes the neighborhood of a vertex $v$ in the graph $G$. 
For a graph or digraph $G$ and arbitrary sets $X,Y\subset V(G)$, define
\[
 e_G(X,Y)
 =\bigl|\{(x,y)\in X\times Y:xy\in E(G)\}\bigr|.
\] 

Choose a sufficiently small absolute constant $\rho_0>0$ and put
\[
  \rho_n=\frac{\rho_0}{(\log\log n)^2}.
\]

\begin{lemma}\label{lem:prime-small-concentration}
For all sufficiently large $n$, if $X$ lies in one
part of $\Gamma_n$ and $U$ lies in the opposite
part with $\abs X\leq\rho_n n$ and $\abs U\leq2\abs{X}$, then 
\[
  e_{\Gamma_n}(X,U)\leq\frac{n}{2\log n}\abs{X}.
\]
\end{lemma}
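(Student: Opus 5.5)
We bound $e_{\Gamma_n}(X,U)$ by double counting through $U$ and Cauchy--Schwarz, using the codegree bound of \cref{lem:prime-codegree} to control pairs in $X$. Write $d_U(x)=\abs{N_{\Gamma_n}(x)\cap U}$, so that $e_{\Gamma_n}(X,U)=\sum_{x\in X}d_U(x)=\sum_{u\in U}\abs{N(u)\cap X}$.

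\textbf{Step 1 (Cauchy--Schwarz).} Put $e=e_{\Gamma_n}(X,U)$. Then
\[
  e^2=\Bigl(\sum_{u\in U}\abs{N(u)\cap X}\Bigr)^2\leq\abs U\sum_{u\in U}\abs{N(u)\cap X}^2 .
\]
Expanding the square gives
\[
  \sum_{u\in U}\abs{N(u)\cap X}^2=e+\sum_{x\neq x'\in X}\abs{N(x)\cap N(x')\cap U}.
\]
Each codegree term is at most $\codeg_{\Gamma_n}(x,x')\leq Cn(\log\log n)^2/\log^2n$ by \cref{lem:prime-codegree}, since $x,x'$ lie in the same part. Hence
\[
  e^2\leq \abs U\, e+\abs U\abs X^2\,\frac{Cn(\log\log n)^2}{\log^2 n}.
\]

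\textbf{Step 2 (solve the quadratic).} From $e^2\leq ae+b$ we get $e\leq a+\sqrt b$. Using $\abs U\leq2\abs X$, this gives
\[
  e\leq 2\abs X+\abs X\sqrt{2\abs X\,\frac{Cn(\log\log n)^2}{\log^2n}}.
\]
Now insert $\abs X\leq\rho_n n=\rho_0 n/(\log\log n)^2$. The $(\log\log n)^2$ factors cancel, so the square root is at most
\[
  \sqrt{2C\rho_0}\,\frac{n}{\log n}.
\]
Choose $\rho_0$ small enough that $\sqrt{2C\rho_0}\leq 1/4$. The additive term $2\abs X$ is $o\bigl(\abs X\, n/\log n\bigr)$, so for large $n$ it is at most $\tfrac14\cdot\frac{n}{\log n}\abs X$. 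Adding the two pieces gives $e\leq\frac{n}{2\log n}\abs X$.

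\textbf{Main obstacle.} The only delicate point is that the codegree bound must be uniform over all pairs. Its worst case carries the factor $(\log\log n)^2$, and this is exactly why $\rho_n$ is taken to be $\rho_0/(\log\log n)^2$: the choice makes the loss cancel. I would also check that the implied constant $C$ in \cref{lem:prime-codegree} is absolute, which it is because it comes from \cref{thm:prime-tuple} with $g=2$. With that, the choice of $\rho_0$ depends only on $C$.
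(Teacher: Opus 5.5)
Your proof is correct and is essentially the paper's argument: Cauchy--Schwarz over the opposite part combined with the uniform codegree bound of \cref{lem:prime-codegree}, with $\rho_n=\rho_0/(\log\log n)^2$ chosen so that the $(\log\log n)^2$ loss cancels. The only difference is minor. The paper extends $\sum_y d_X(y)^2$ to all $y$ and bounds the diagonal term $\sum_{x\in X}d_{\Gamma_n}(x)$ using \cref{lem:prime-degrees}, whereas you keep the sum over $U$, so the diagonal term is $e$ itself and is absorbed by solving $e^2\leq ae+b$, which makes the degree estimate unnecessary.
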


\begin{proof}
The assertion is trivial when $x=\emptyset$, so assume $X\neq \emptyset$. Given a vertex $y$, write 
\[
  d_X(y)=\abs{N_{\Gamma_n}(y)\cap X}.
\]
Let $\delta_{xy}$ be $1$ or $0$ according as $xy\in E(\Gamma_n)$ or not. Note that
\begin{align*}
  \sum_y d_X(y)^2=\sum_y\left(\sum_{x\in X} \delta_{xy}\right)^2
  =\sum_{x\in X}d_{\Gamma_n}(x)
   +\sum_{\substack{x,x'\in X\\x\ne x'}}
    \codeg_{\Gamma_n}(x,x').
\end{align*}
Hence by Lemma \ref{lem:prime-degrees} 
and Lemma \ref{lem:prime-codegree}, there is a constant $C$ such that
\[
  \sum_y d_X(y)^2
  \leq C\abs{X}\left(
    \frac{n}{\log n}
    +\frac{n(\log\log n)^2\abs{X}}{\log^2n}
  \right).
\]
By Cauchy--Schwarz's inequality, we have
\begin{align*}
  e_{\Gamma_n}(X,U)^2
  &\leq\abs U\sum_y d_X(y)^2\\
  &\leq2C\abs{X}^2\left(
    \frac{n}{\log n}
    +\frac{n(\log\log n)^2\abs{X}}{\log^2n}
  \right).
\end{align*}
After division by $(n\abs{X}/\log n)^2$, by the upper bound for $\abs{X}$, the right-hand side is at most
\[
  2C\left(\frac{\log n}{n}+\rho_0\right).
\]
Choose $\rho_0$ sufficiently small in terms of $C$ and then take $n$
sufficiently large.  This proves the assertion.  
\end{proof}

\begin{lemma}\label{lem:local-spectrum}
Let $Q$ be an odd square-free integer divisible by $3$, and let $\varepsilon_Q$ be the principal Dirichlet character modulo $Q$. Let $K_Q$ be
the $Q\times Q$ matrix with $(i,j)$-entry 
$
  \varepsilon_Q(i+j).
$
Then its largest singular value is $\varphi(Q)$, with constant singular vectors, and every other singular value is at most $\varphi(Q)/2$.
\end{lemma}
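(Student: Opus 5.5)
Since $K_Q$ depends only on $i+j$ modulo $Q$, it is a Hankel-type circulant matrix, and I would diagonalize it with additive characters of $\mathbb Z/Q\mathbb Z$. Writing $\psi_a(j)=\ee(aj/Q)$ for $a\in\mathbb Z/Q\mathbb Z$, we have
\[
  (K_Q\psi_a)(i)=\sum_j\varepsilon_Q(i+j)\ee(aj/Q)=\ee(-ai/Q)\,c_Q(a),
  \qquad c_Q(a)=\sum_{m\bmod Q}\varepsilon_Q(m)\ee(am/Q),
\]
where $c_Q(a)$ is the Ramanujan sum. Hence $K_Q\psi_a=c_Q(a)\psi_{-a}$. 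The vectors $\psi_a$ are orthogonal of equal norm, and $a\mapsto -a$ is a permutation. It follows that $K_Q^*K_Q\psi_a=\abs{c_Q(a)}^2\psi_a$, so the singular values of $K_Q$ are exactly the numbers $\abs{c_Q(a)}$ for $a\in\mathbb Z/Q\mathbb Z$, with $\psi_a$ as the corresponding right singular vectors. For $a=0$ we get the constant vector and singular value $c_Q(0)=\varphi(Q)$.

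It remains to bound $\abs{c_Q(a)}$ for $a\not\equiv0\pmod Q$. Since $Q$ is square-free, the Chinese remainder theorem makes $c_Q(a)$ multiplicative over the prime factors of $Q$:
\[
  c_Q(a)=\prod_{p\mid Q}c_p(a),\qquad c_p(a)=\begin{cases}p-1,&p\mid a,\\-1,&p\nmid a.\end{cases}
\]
Thus $\abs{c_Q(a)}=\prod_{p\mid (a,Q)}(p-1)=\varphi(\gcd(a,Q))$. If $a\not\equiv0$, then $d=\gcd(a,Q)$ is a proper divisor of $Q$, and so
\[
  \abs{c_Q(a)}=\varphi(d)=\frac{\varphi(Q)}{\varphi(Q/d)}.
\]
Here $Q/d>1$ is an odd square-free integer, so each of its prime factors satisfies $p\geq3$, giving $p-1\geq2$. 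Therefore $\varphi(Q/d)\geq2$ and $\abs{c_Q(a)}\leq\varphi(Q)/2$.

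The one point needing care is the claim that the top singular value $\varphi(Q)$ belongs to the constant vectors alone. I would confirm this by the same computation: $\abs{c_Q(a)}=\varphi(Q)$ forces $\varphi(Q/d)=1$, hence $d=Q$ and $a=0$. So $\varphi(Q)$ is a simple singular value, and the strict gap follows from the displayed bound. The hypothesis $3\mid Q$ is not needed for this estimate; oddness suffices, since it guarantees every prime factor of $Q/d$ is at least $3$. Divisibility by $3$ presumably matters later in the paper. The main potential obstacle is bookkeeping rather than substance: the Hankel structure sends $\psi_a$ to $\psi_{-a}$, not to itself, so one must pass through $K_Q^*K_Q$ instead of reading singular values directly as eigenvalues.
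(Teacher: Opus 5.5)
Your proof is correct and follows essentially the same route as the paper: you diagonalize $K_Q$ with additive characters to get $K_Q\psi_a=c_Q(a)\psi_{-a}$, read off the singular values as $\abs{c_Q(a)}$, and bound $\abs{c_Q(a)}=\varphi(Q)/\varphi(Q/(a,Q))\leq\varphi(Q)/2$ using that $Q$ is odd. You also derive the Ramanujan-sum formula explicitly via multiplicativity, a step the paper simply quotes, and you are right that the hypothesis $3\mid Q$ plays no role in the argument.
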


\begin{proof}
For $1\leq r\leq Q$, define the vectors 
$$
v_r=\frac{1}{\sqrt{Q}}\left(\ee\!\left(\frac{r}{Q}\right),\ee\!\left(\frac{2r}{Q}\right),\ldots,\ee\!\left(\frac{rQ}{Q}\right)\right)^{\mathsf T}.
$$
Note that for any $1\leq r,j\leq Q$
$$
\sum_{k=1}^Q\varepsilon_Q(j+k)\ee\!\left(\frac{rk}{Q}\right)=c_Q(r)\ee\!\left(\frac{-rj}{Q}\right),
$$
where $c_Q(r)$ are the Ramanujan sums
\[
  \sum_{\substack{u\bmod Q\\(u,Q)=1}}
  \ee\!\left(\frac{ru}{Q}\right).
\]
Hence $$K_Qv_r=c_Q(r)\overline{v_r}.$$
Since the matrix with column vectors $v_1,v_2,\ldots,v_Q$ is unitary, the singular values of $K_Q$ are the absolute values of $c_Q(r)$. Since $Q$ is square-free,
\[
  \abs{c_Q(r)}
  =\frac{\varphi(Q)}{\varphi(Q/(Q,r))}.
\]
For $r=Q$ this is $\varphi(Q)$.  If $r\ne Q$, then
$Q/(Q,r)>1$.  This divisor is odd, so its totient is at least $2$.
The assertion follows.
\end{proof}

\begin{lemma}\label{lem:local-density}
Let
\[
  w=(\log4n)^{1/100},
  \qquad
  Q=\frac{P(w)}{2}.
\]
Suppose $X\subseteq O_n$ and $Y\subseteq E_n$ with
\[
  \abs X,\abs Y\geq\rho_n n,
  \qquad
  \abs X+\abs Y\geq n-\frac n{\log^2n}.
\]
Then, for all sufficiently large $n$,
\[
  \sum_{\substack{a,b\bmod Q\\(a+b,Q)=1}}
  \abs{X\cap(a\bmod Q)}\abs{Y\cap(b\bmod Q)}
  \geq\frac{\varphi(Q)}{3Q}\abs X\abs Y.
\]
\end{lemma}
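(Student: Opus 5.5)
The plan is to read the left-hand side as a bilinear form in the matrix $K_Q$ of \cref{lem:local-spectrum} and to separate the constant (top singular) direction from the rest. Let $\mathbf x,\mathbf y\in\mathbb R^{Q}$ have entries $x_a=\abs{X\cap(a\bmod Q)}$ and $y_b=\abs{Y\cap(b\bmod Q)}$. Then the left-hand side is exactly $\mathbf x^{\mathsf T}K_Q\mathbf y$. Here $Q$ is odd, square-free and divisible by $3$ once $w>3$, so \cref{lem:local-spectrum} applies. Write
\[
\mathbf x=\frac{\abs X}{Q}\one+\mathbf x',\qquad \mathbf y=\frac{\abs Y}{Q}\one+\mathbf y',
\]
with $\mathbf x',\mathbf y'\perp\one$.

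Every row sum of the symmetric matrix $K_Q$ is $\varphi(Q)$, so $K_Q\one=\varphi(Q)\one$. Hence the cross terms vanish, and the main term is
\[
\frac{\abs X\abs Y}{Q^2}\,\one^{\mathsf T}K_Q\one=\frac{\varphi(Q)}{Q}\abs X\abs Y .
\]
Since $K_Q$ maps $\one^{\perp}$ into $\one^{\perp}$, \cref{lem:local-spectrum} gives $\abs{\mathbf x'^{\mathsf T}K_Q\mathbf y'}\le\frac{\varphi(Q)}2\norm{\mathbf x'}\norm{\mathbf y'}$. It therefore suffices to prove
\[
\norm{\mathbf x'}\norm{\mathbf y'}\le\frac43\cdot\frac{\abs X\abs Y}{Q}.
\]

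To bound the norms I will use only the fact that the residue classes are nearly equal in size. Because $Q$ is odd, the odd integers in $[2n]$ meet each class mod $Q$ at most $M:=n/Q+1$ times, and the same holds for the even ones. Thus $x_a,y_b\le M$, so $\sum_a x_a^2\le M\abs X$, and
\[
\norm{\mathbf x'}^2=\sum_a x_a^2-\frac{\abs X^2}{Q}\le\abs X\Bigl(M-\frac{\abs X}{Q}\Bigr).
\]
Put $\alpha=\abs X/n$ and $\beta=\abs Y/n$. Since $P(w)=\exp((1+o(1))w)=n^{o(1)}$, we have $QM/n=1+o(1)$. The required inequality then reduces to
\[
\bigl(1-\alpha+o(1)\bigr)\bigl(1-\beta+o(1)\bigr)\le\frac{16}{9}\alpha\beta .
\]

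This last step is the only delicate point: the additive errors have to be converted into relative ones. The hypothesis gives $1-\alpha\le\beta+1/\log^2n$ and $1-\beta\le\alpha+1/\log^2n$. The errors $1/\log^2 n$ and $Q/n$ are both $o(\rho_n)$, and $\alpha,\beta\ge\rho_n=\rho_0/(\log\log n)^2$, so they are $o(\alpha)$ and $o(\beta)$. Hence the left side is at most $(1+o(1))\alpha\beta<\frac{16}9\alpha\beta$ for large $n$. Consequently the error term is at most $\frac{\varphi(Q)}2\cdot\frac43\cdot\frac{\abs X\abs Y}{Q}=\frac23\cdot\frac{\varphi(Q)}{Q}\abs X\abs Y$, which leaves at least $\frac{\varphi(Q)}{3Q}\abs X\abs Y$, as claimed.
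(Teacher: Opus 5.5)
Your proof is correct and follows essentially the same route as the paper. You write the sum as a bilinear form in $K_Q$, split off the constant direction, bound the remainder by the second singular value $\varphi(Q)/2$ from \cref{lem:local-spectrum}, and control $\norm{\mathbf x'}\norm{\mathbf y'}$ via the class-size bound $n/Q+1$ and $\alpha+\beta\ge 1-1/\log^2 n$. The only cosmetic difference is in the last step: you bound each factor separately, $1-\alpha+Q/n\le\beta(1+o(1))$ and $1-\beta+Q/n\le\alpha(1+o(1))$, whereas the paper expands the product using $(1-\alpha)(1-\beta)\le\alpha\beta+1/\log^2 n$ together with $\alpha\beta\ge\rho_n/3$.
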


\begin{proof}
Let $A$ and $B$ be $Q$-dimensional vectors with entries
\[
  A_i=\abs{X\cap(i\bmod Q)},
  \quad
  B_i=\abs{Y\cap(i\bmod Q)},
  \quad 1\leq i\leq Q.
\]
It is trivial that
\[
  0\leq A_i,B_i\leq\frac nQ+1.
\]
Let $\one$ denote the all-ones vector. Write
\[
  A=\frac{\abs X}{Q}\mathbf1+A_0,
  \qquad
  B=\frac{\abs Y}{Q}\mathbf1+B_0.
\]
It is easy to see that $A_0,B_0$ are orthogonal to the constant vector.  By Lemma
\ref{lem:local-spectrum},
\begin{equation*}
  A^{\mathsf T}K_QB
  \geq\frac{\varphi(Q)}Q\abs X\abs Y
  -\frac{\varphi(Q)}2\norm{A_0}_2\norm{B_0}_2.
\end{equation*}
Write
\[
  \abs X=\alpha n,\qquad \abs Y=\beta n.
\]
The above bound for $A_i$ gives
\begin{align*}
  \norm{A_0}_2^2
  &=\sum_{i=1}^QA_i^2-\frac{\abs X^2}Q\\
  &\leq\left(\frac nQ+1\right)\abs X-\frac{\abs X^2}Q\\
  &=\frac{n^2}Q\alpha\left(1-\alpha+\frac Qn\right),
\end{align*}
and similarly for $B_0$.

Put $\delta=Q/n$. The elementary estimate $\log Q=O(w)$ gives
$\delta=n^{-1+o(1)}$. Also, for large $n$, $\alpha\beta\ge\max\{\alpha,\beta\}\rho_n\geq\rho_n/3$.
Finally,
\begin{align*}
 A^{\mathsf T}K_QB&\geq\frac{\varphi(Q)}Q\abs X\abs Y
  \left(1-\frac{1}{2}\sqrt{\frac{(1-\alpha+\delta)(1-\beta+\delta)}{\alpha\beta}}\right) \\
 &\geq\frac{\varphi(Q)}Q\abs X\abs Y
  \left(1-\frac{1}{2}\sqrt{1+(\alpha\beta)^{-1}\left(\frac{1}{\log^2n}+2\delta+\delta^2\right)}\right)\\
 &=\frac{\varphi(Q)}Q\abs X\abs Y
  \left(\frac12-o(1)\right),
\end{align*}
which is stronger than the stated bound.
\end{proof}

\begin{lemma}\label{prop:prime-large-mixing}
If $X\subseteq O_n$ and $Y\subseteq E_n$
satisfy
\[
  \abs X,\abs Y\geq\rho_n n,
  \qquad
  \abs X+\abs Y\geq n-\frac n{\log^2n},
\]
then there is an absolute constant $c_1>0$ such that for
all sufficiently large $n$, 
\[
  e_{\Gamma_n}(X,Y)\geq c_1\frac{\abs X\abs Y}{\log n}.
\]
\end{lemma}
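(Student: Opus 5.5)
Since $X$ consists of odd numbers and $Y$ of even numbers, every sum $x+y$ with $x\in X$, $y\in Y$ is an odd integer in $[3,4n-1]$. I will apply \cref{lem:prime-bilinear} with $N=4n$, where the hypothesis $x+y\leq N$ holds trivially. This gives
\[
  \sum_{x\in X}\sum_{y\in Y}\Lambda(x+y)
  =\sum_{x\in X}\sum_{y\in Y}\Lambda_{\Cr,w}(x+y)
  +O_{\mathrm{ineff}}\bigl(n(\log n)^{-c}\sqrt{\abs X\abs Y}\bigr),
\]
with $w=(\log 4n)^{1/100}$, which is the same $w$ as in \cref{lem:local-density}.

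For the main term, $x+y$ is odd, so $(x+y,P(w))=1$ is equivalent to $(x+y,Q)=1$ with $Q=P(w)/2$. Therefore
\[
  \sum_{x,y}\Lambda_{\Cr,w}(x+y)=\frac{P(w)}{\varphi(P(w))}\sum_{\substack{a,b\bmod Q\\(a+b,Q)=1}}\abs{X\cap(a\bmod Q)}\abs{Y\cap(b\bmod Q)}.
\]
Moreover $P(w)/\varphi(P(w))=2Q/\varphi(Q)$. \Cref{lem:local-density} then bounds the main term below by $\frac{2Q}{\varphi(Q)}\cdot\frac{\varphi(Q)}{3Q}\abs X\abs Y=\frac23\abs X\abs Y$. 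Note that $3\mid Q$ and $Q$ is odd and square-free once $w>3$, so the hypotheses of \cref{lem:local-spectrum} hold.

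Next I compare the error term with this main term. Since $\abs X,\abs Y\geq\rho_n n$, we have $\sqrt{\abs X\abs Y}\geq\rho_n n$. Hence
\[
  n(\log n)^{-c}\sqrt{\abs X\abs Y}\leq\frac{(\log n)^{-c}}{\rho_n}\abs X\abs Y=\frac{(\log\log n)^2}{\rho_0(\log n)^{c}}\abs X\abs Y=o(\abs X\abs Y).
\]
So for large $n$ we get $\sum_{x,y}\Lambda(x+y)\geq\frac12\abs X\abs Y$. I expect this comparison to be the delicate step: the loss of $\rho_n^{-1}$ must be absorbed by the power saving $(\log n)^{-c}$, and it is, because $\rho_n$ decays only like $(\log\log n)^{-2}$.

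Finally I pass from $\Lambda$ to counting edges. The contribution of prime powers $p^k$ with $k\geq2$ is harmless. For each $x$, the number of such values of $x+y$ up to $4n$ is $O(\sqrt n\log n)$, each carrying weight $\log 4n$. This gives a total of $O(\abs X\sqrt n\log^2 n)$, which is $o(\abs X\abs Y)$ because $\abs Y\geq\rho_n n$. Each pair with $x+y$ prime contributes at most $\log(4n)$, and $x\neq y$ always holds by parity. It follows that
\[
  e_{\Gamma_n}(X,Y)\geq\frac{\frac12\abs X\abs Y-o(\abs X\abs Y)}{\log 4n}\geq c_1\frac{\abs X\abs Y}{\log n},
\]
for instance with $c_1=1/3$ and $n$ large.
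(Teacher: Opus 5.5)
Your proposal is correct and follows essentially the same route as the paper: apply \cref{lem:prime-bilinear} with $N=4n$, bound the $\Lambda_{\Cr,w}$ main term below by $\frac23\abs X\abs Y$ via \cref{lem:local-density} (you usefully make the normalization $P(w)/\varphi(P(w))=2Q/\varphi(Q)$ explicit), absorb the error using $\abs X\abs Y\geq\rho_n^2n^2$, and discard proper prime powers before dividing by $\log 4n$. The only cosmetic difference is that you bound the prime-power contribution separately for each $x$ rather than through $r_{X,Y}(m)\leq n$. Your count $O(\sqrt n\log n)$ of proper prime powers up to $4n$ overestimates the true $O(\sqrt n)$, but this is harmless.
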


\begin{proof}
Take $N=4n$ and $w=(\log N)^{1/100}$, and write $P(w)=2Q$ as in
Lemma \ref{lem:local-density}. Since every $x+y$ with
$(x,y)\in O_n\times E_n$ is odd,
\[
  (x+y,P(w))=1 \quad \Longleftrightarrow \quad (x+y,Q)=1.
\]
Thus Lemma \ref{lem:local-density} yields
\[
  \sum_{x\in X}\sum_{y\in Y}\Lambda_{\Cr,w}(x+y)
  \geq\frac23\abs X\abs Y.
\]
By Lemma \ref{lem:prime-bilinear}, the error in replacing $\Lambda_{\Cr,w}$ by
$\Lambda$ is
\[
  O_{\mathrm{ineff}}\!\left(
    n(\log n)^{-c}\sqrt{\abs X\abs Y}
  \right)=o(\abs X\abs Y).
\]
Indeed, the hypotheses give
$\abs X\abs Y\geq(\rho_n/3)n^2$.

It remains to remove prime powers. Put
\[
  \vartheta(m)=\begin{cases}
    \log m,&m\text{ is prime},\\
    0,&\text{otherwise}.
  \end{cases}
\]
If
\[
  r_{X,Y}(m)=\abs{\{(x,y)\in X\times Y:x+y=m\}},
\]
then $r_{X,Y}(m)\leq n$, and hence
\begin{align*}
 \sum_{x\in X}\sum_{y\in Y}
 \bigl(\Lambda-\vartheta\bigr)(x+y)
 &\leq n\sum_{\substack{p^k\leq4n\\k\geq2}}\log p\\
 &\ll n^{3/2}\log n=o(\abs X\abs Y).
\end{align*}
Therefore, for all sufficiently large $n$,
\[
  \sum_{x\in X}\sum_{y\in Y}\vartheta(x+y)
  \geq\frac13\abs X\abs Y.
\]
Since every relevant prime is at most $4n$, division by $\log4n$
proves the result.
\end{proof}

\section{Proof of Theorem \ref{thm:prime-circle}}
\label{sec:prime-proof}

The following matching theorem is due to Greenfield and Greenfield
\cite[Theorem 1]{GreenfieldGreenfield}.

\begin{theorem}\label{lem:prime-matching}
For every $n\geq1$, the graph $\Gamma_n$ has a perfect matching.
\end{theorem}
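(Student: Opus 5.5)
The statement is the Greenfield--Greenfield theorem, and I would prove it directly by strong induction on $n$, constructing the matching explicitly rather than through the analytic machinery of the previous sections. The key input is Bertrand's postulate: for every $m\geq1$ there is a prime $p$ with $m<p\leq 2m$. For $n=1$ the pair $\{1,2\}$ works, since $1+2=3$ is prime. We may also take the empty matching as the case $n=0$, on the empty vertex set.

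For the inductive step, fix $n\geq1$ and apply Bertrand's postulate with $m=2n$ to obtain a prime $p$ with $2n<p\leq4n$. Because $4n$ is even, $p$ is odd and in fact $p\leq 4n-1$. Set $k=p-2n\geq1$; it is odd and satisfies $k\leq 2n-1$. Now pair $j$ with $p-j$ for $k\leq j\leq 2n$. Each such pair has prime sum $p$, and the map $j\mapsto p-j$ is an involution of the interval $[k,2n]$ with no fixed point, since $p$ is odd. The interval has $2n-k+1$ elements, which is even because $k$ is odd, so it is partitioned into $(2n-k+1)/2$ edges of $\Gamma_n$.

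What remains is $[k-1]=[2n']$ with $n'=(k-1)/2<n$. By the induction hypothesis, $\Gamma_{n'}$ has a perfect matching. Since $\Gamma_{n'}$ is the induced subgraph of $\Gamma_n$ on $[2n']$, that matching consists of edges of $\Gamma_n$, and combining it with the pairs above gives a perfect matching of $\Gamma_n$.

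No step is a real obstacle. The only points to check are the parity ones: $k$ must be odd so that the interval $[k,2n]$ has even size, and $p$ must be odd so that $p-j\neq j$. Both follow from $p$ being an odd prime exceeding $2n\geq2$.
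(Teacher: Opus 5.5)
Your proof is correct. Bertrand's postulate with $m=2n$ gives an odd prime $p\in(2n,4n)$, and $j\mapsto p-j$ is then a fixed-point-free involution of $[p-2n,2n]$. The leftover segment $[p-2n-1]$ has even length $2n'$ with $n'<n$, so strong induction closes.

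The paper gives no proof of this statement; it simply imports it from Greenfield and Greenfield \cite{GreenfieldGreenfield}. Your induction is the classical Bertrand-postulate argument for this fact, so the difference is only that you make this step self-contained in a few lines instead of citing it.

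Nothing beyond existence is needed downstream. \cref{lem:prime-semidegree} and \cref{prop:prime-robust} hold for an arbitrary perfect matching $M$, so the particular structure of the matching you construct plays no further role in \cref{sec:prime-proof}.
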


Fix a perfect matching
\[
  M=\{o_i e_i:1\leq i\leq n\},
  \qquad o_i\in O_n,\quad e_i\in E_n.
\]
Define a loopless digraph $D_M$ on $[n]$ by
\[
  ij\in E(D_M)
  \quad\Longleftrightarrow\quad
  i\ne j\ \text{ and }\ o_i+e_j\text{ is prime}.
\]

\begin{lemma}\label{lem:prime-lifting}
If $D_M$ has a directed Hamilton cycle, then $\Gamma_n$ has a Hamilton
cycle.
\end{lemma}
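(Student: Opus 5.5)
Let $i_1 i_2\cdots i_n i_1$ be a directed Hamilton cycle in $D_M$. The plan is to lift it to $\Gamma_n$ by replacing each vertex $i_k$ with the matching edge $o_{i_k}e_{i_k}$, traversed as $e_{i_k}\to o_{i_k}$. The candidate Hamilton cycle of $\Gamma_n$ is therefore
\[
  e_{i_1}\,o_{i_1}\,e_{i_2}\,o_{i_2}\,\cdots\,e_{i_n}\,o_{i_n}\,e_{i_1}.
\]
Because $M$ is a perfect matching, each vertex of $[2n]=O_n\cup E_n$ occurs exactly once in this sequence. So it suffices to check that consecutive terms are adjacent in $\Gamma_n$.

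There are two kinds of consecutive pairs, and we check them in turn.
\begin{enumerate}
  \item The pair $e_{i_k}o_{i_k}$ is an edge of $M\subseteq E(\Gamma_n)$.
  \item The pair $o_{i_k}e_{i_{k+1}}$ (indices mod $n$) is an edge of $\Gamma_n$. The arc $i_ki_{k+1}\in E(D_M)$ means $i_k\neq i_{k+1}$ and $o_{i_k}+e_{i_{k+1}}$ is prime. Since $o_{i_k}$ is odd and $e_{i_{k+1}}$ is even, these are distinct vertices of $\Gamma_n$ with prime sum.
\end{enumerate}
Hence the closed walk has $2n$ distinct vertices and every step is an edge, so it is a Hamilton cycle.

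The only degenerate case is small $n$, where a directed Hamilton cycle of $D_M$ would be a $2$-cycle or a loop. For $n=2$, the sequence $e_{i_1}o_{i_1}e_{i_2}o_{i_2}$ is still a genuine $4$-cycle: its four vertices are distinct, and its four edges are distinct as unordered pairs. For $n=1$, $D_M$ is loopless and so has no Hamilton cycle, and the statement is vacuous; in any case the paper only applies the lemma for large $n$.

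I expect no real obstacle. The one point to state explicitly is that distinct indices $i$ give distinct $o_i$ and distinct $e_i$, so the lifted cycle visits every vertex exactly once; this is exactly the perfect-matching property.
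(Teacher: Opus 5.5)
Your proposal is correct and is the same argument as the paper's: the directed Hamilton cycle $i_1\to i_2\to\cdots\to i_n\to i_1$ lifts to $e_{i_1},o_{i_1},e_{i_2},o_{i_2},\ldots,e_{i_n},o_{i_n},e_{i_1}$, with the matching edges and the arcs of $D_M$ supplying the adjacencies. Your explicit checks that the vertices are distinct (via the perfect matching) and that the degenerate cases $n\le 2$ cause no trouble are sound additions the paper leaves implicit.
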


\begin{proof}
If
$i_1\to i_2\to\cdots\to i_n\to i_1$ is a directed Hamilton cycle in
$D_M$, then
\[
  e_{i_1},o_{i_1},e_{i_2},o_{i_2},\ldots,
  e_{i_n},o_{i_n},e_{i_1}
\]
is a Hamilton cycle in $\Gamma_n$.
\end{proof}

\begin{lemma}\label{lem:prime-semidegree}
For all sufficiently large $n$ and every choice of $M$,
\[
  \delta^0(D_M)\geq\frac n{\log n}.
\]
\end{lemma}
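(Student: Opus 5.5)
The out-degree of $i$ in $D_M$ is the number of $j\neq i$ such that $o_i+e_j$ is prime. The in-degree of $j$ is the number of $i\neq j$ such that $o_i+e_j$ is prime. Since $M$ is a perfect matching, $j\mapsto e_j$ is a bijection $[n]\to E_n$, and $i\mapsto o_i$ is a bijection $[n]\to O_n$. Hence the out-neighbours of $i$ in $D_M$ correspond exactly to the neighbours of $o_i$ in $\Gamma_n$ lying in $E_n$, with at most one exception: the matched partner $e_i$, which is excluded because $j\neq i$ is required. In symbols,
\[
  d^+_{D_M}(i)\geq \abs{N_{\Gamma_n}(o_i)\cap E_n}-1 ,
  \qquad
  d^-_{D_M}(j)\geq \abs{N_{\Gamma_n}(e_j)\cap O_n}-1 .
\]

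Next I check that essentially every neighbour in $\Gamma_n$ lies in the opposite part. Any prime sum $x+y>2$ is odd, so $x$ and $y$ have opposite parity. The only same-part pair would need $x+y=2$, which forces $x=y=1$, and that is excluded since $x\neq y$. So $N_{\Gamma_n}(v)$ lies entirely in the opposite part for every $v$. Therefore $d^\pm_{D_M}\geq d_{\Gamma_n}(v)-1$ for the corresponding vertex $v$.

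Now apply \cref{lem:prime-degrees}, which holds uniformly in $v$: $d_{\Gamma_n}(v)=(2+o(1))n/\log n$. So for all sufficiently large $n$, every vertex has $\Gamma_n$-degree at least $(3/2)n/\log n$. Subtracting $1$ still leaves more than $n/\log n$, independently of $M$, which gives $\delta^0(D_M)\geq n/\log n$.

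No step is a real obstacle. The only points needing care are that the uniformity in \cref{lem:prime-degrees} makes the bound independent of the choice of $M$, and that only one neighbour (the matched partner) is lost.
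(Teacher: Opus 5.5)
Your proposal is correct and follows the paper's proof: both note that $d^+_{D_M}(i)=d_{\Gamma_n}(o_i)-1$ and $d^-_{D_M}(i)=d_{\Gamma_n}(e_i)-1$, since only the matching edge $o_ie_i$ is lost, and then apply the uniform degree estimate of \cref{lem:prime-degrees}. Your explicit check that $\Gamma_n$ has no edges inside a part, which the paper takes for granted, is a harmless extra, and your two inequalities in fact hold with equality.
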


\begin{proof}
For each $i$,
\[
  d^+_{D_M}(i)=d_{\Gamma_n}(o_i)-1,
  \qquad
  d^-_{D_M}(i)=d_{\Gamma_n}(e_i)-1.
\]
The deleted edge is exactly $o_ie_i$. The result follows from Lemma
\ref{lem:prime-degrees}.
\end{proof}

Fix a sufficiently small absolute constant $\kappa>0$ and put
\begin{equation}\label{eq:prime-parameters}
  \eta=\frac1{2\log n},
  \qquad
  \tau=\frac1{32\log n},
  \qquad
  \nu=\frac\kappa{\log^2n}.
\end{equation}

\begin{lemma}
\label{prop:prime-robust}
For all sufficiently large $n$ and every perfect matching $M$, the digraph $D_M$ is a robust
$(\nu,\tau)$-outexpander.
\end{lemma}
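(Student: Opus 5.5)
Fix $S\subseteq[n]$ with $\tau n\leq\abs S\leq(1-\tau)n$, and write $R=\RN^+_{\nu,D_M}(S)$. We argue by contradiction. Suppose $\abs R\leq\abs S+\nu n$, and put $T=[n]\setminus R$. Translating to $\Gamma_n$, set
\[
X=\{o_i:i\in S\}\subseteq O_n,\qquad Y=\{e_j:j\in T\}\subseteq E_n .
\]
By definition of $R$, every $j\in T$ has fewer than $\nu n$ inneighbours in $S$. Since $o_i+e_j$ prime with $i\ne j$ is exactly an arc $ij$, and the diagonal contributes at most $\abs T$ further pairs, we get
\[
e_{\Gamma_n}(X,Y)\leq \nu n\abs T+\abs T\leq 2\kappa\frac{n\abs T}{\log^2 n}.
\]
The sizes satisfy $\abs X=\abs S\geq \tau n$ and $\abs Y=n-\abs R\geq n-\abs S-\nu n$. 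We now split according to how large $\abs Y$ is.

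\emph{Case 1: $\abs Y\geq\rho_n n$.} Here $\abs X\geq \tau n=n/(32\log n)$, which is much smaller than $\rho_n n$, so Lemma~\ref{prop:prime-large-mixing} does not apply directly. The hypothesis $\abs X+\abs Y\geq n-n/\log^2 n$ of that lemma is fine, since $\abs X+\abs Y\geq n-\nu n$ and $\kappa<1$. To get around the small size of $X$, we treat the subcases separately.

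If $\abs X\geq \rho_n n$, Lemma~\ref{prop:prime-large-mixing} gives $e(X,Y)\geq c_1\abs X\abs Y/\log n$. Combined with the upper bound, this yields $c_1\abs X\leq 2\kappa n/\log n$. But $\abs X\geq\rho_n n\gg n/\log n$, a contradiction.

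If $\tau n\leq\abs X<\rho_n n$, we count edges from $X$ instead. Each $o_i$ has degree $(2+o(1))n/\log n$ by Lemma~\ref{lem:prime-degrees}, and its neighbours lie in $Y$, in $\{e_j:j\in R\}$, or in $\{e_j:j\in S\}$. Let $U=\{e_j:j\in R\}$, so $\abs U\leq\abs S+\nu n\leq 2\abs X$, because $\nu n\leq \tau n\leq\abs X$ once $\kappa\leq 1/32$. Lemma~\ref{lem:prime-small-concentration} bounds $e(X,U)$ by $\frac{n}{2\log n}\abs X$. Hence
\[
e(X,Y)\geq (2-o(1)-\tfrac12)\frac{n\abs X}{\log n}\geq\frac{n\abs X}{\log n}.
\]
The upper bound, however, gives $e(X,Y)\leq 2\kappa n\abs T/\log^2n\leq 2\kappa n^2/\log^2 n$. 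This contradicts the lower bound only when $\abs X\geq 2\kappa n/\log n$, which does hold since $\abs X\geq\tau n=n/(32\log n)$ and $\kappa$ is small.

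\emph{Case 2: $\abs Y<\rho_n n$.} Now $\abs R>n-\rho_n n$, so $\abs S\geq \abs R-\nu n$ is near $n$; in particular $\abs S\geq n/2$. Use the upper constraint $\abs S\leq(1-\tau)n$, so that $\abs T\geq\tau n-\nu n\geq \tau n/2$. We reverse roles and count edges into $Y$. Each $e_j$ with $j\in T$ has degree $(2+o(1))n/\log n$, and fewer than $\nu n+1$ of its neighbours lie in $X$. So almost all its neighbours lie in $Z=\{o_i:i\notin S\}$, which has size $n-\abs S\leq \abs T+\nu n\leq 2\abs Y$ (using $\nu n\leq\abs T$). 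Lemma~\ref{lem:prime-small-concentration}, applied to $Y$ and $Z$ with $\abs Y\leq \rho_n n$, gives
\[
e(Y,Z)\leq \frac{n\abs Y}{2\log n},
\]
which contradicts the lower bound $(2-o(1))n\abs Y/\log n-(\nu n+1)\abs Y$.

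The main obstacle is the careful bookkeeping of sizes in the intermediate range. In particular, Lemma~\ref{lem:prime-small-concentration} needs $\abs U\leq 2\abs X$ and is only available below $\rho_n n$, while Lemma~\ref{prop:prime-large-mixing} is only available above $\rho_n n$; the case split must be arranged so that every configuration falls under one of the two. A secondary point is choosing $\kappa$ small enough relative to $c_1$ and the constant $1/32$ in $\tau$.
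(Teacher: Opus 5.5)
Your proof is correct and follows essentially the same route as the paper. It uses the same contradiction setup with $X=\{o_i:i\in S\}$, $Y=\{e_j:j\in T\}$ and an upper bound on $e_{\Gamma_n}(X,Y)$, and the same three size regimes: your Case~1 with $|X|<\rho_n n$, your Case~2, and your Case~1 with $|X|\ge\rho_n n$ are the paper's Cases~1, 2 and~3. As in the paper, the degree and small-concentration lemmas handle the lopsided regimes and the large-mixing lemma handles the balanced one. The differences are cosmetic: you count the diagonal as $|T|$ instead of $n$, and you phrase Case~2 vertex by vertex. Also, $\kappa$ does not actually need to depend on $c_1$.
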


\begin{proof}
Let $S\subseteq[n]$ satisfy
\[
  \tau n\leq s:=\abs S\leq(1-\tau)n.
\]
Suppose for a contradiction that, with
\[
  R=\RN^+_{\nu,D_M}(S),
  \qquad T=[n]\setminus R,
  \qquad t=\abs T,
\]
we have $\abs R\leq s+\nu n$.  Then
\[
  s+t\geq n-\nu n,
  \qquad
  t\geq(\tau-\nu)n\geq\frac{\tau n}{2}
\]
for large $n$.  The definition of $T$ gives
\[
  e_{D_M}(S,T)<\nu nt.
\]

Let
\[
  X=\{o_i:i\in S\}\subseteq O_n,
  \qquad
  Y=\{e_j:j\in T\}\subseteq E_n.
\]
Only matching edges with an index in $S\cap T$ are counted in
$e_{\Gamma_n}(X,Y)$ but not in $e_{D_M}(S,T)$.  Consequently,
\begin{equation}\label{eq:prime-robust-upper}
  e_{\Gamma_n}(X,Y)<\nu nt+n.
\end{equation}

We distinguish three cases.

\emph{Case 1: $s\leq\rho_n n$.}
The set $U=E_n\setminus Y$ satisfies
\[
  \abs U=\abs R\leq s+\nu n\leq2s
\]
for large $n$, since $s\geq\tau n$ and $\nu=o(\tau)$.  By Lemma \ref{lem:prime-degrees} and Lemma
\ref{lem:prime-small-concentration},
\[
  e_{\Gamma_n}(X,Y)
  \geq\frac12\frac{sn}{\log n}
  \geq\frac{n^2}{64\log^2n}.
\]
This contradicts \eqref{eq:prime-robust-upper} when, for example,
$\kappa<1/128$ and $n$ is large.

\emph{Case 2: $t\leq\rho_n n$.}
Now $U=O_n\setminus X$ has
\[
  \abs U=n-s\leq t+\nu n\leq2t.
\]
Applying Lemma \ref{lem:prime-degrees} and Lemma \ref{lem:prime-small-concentration} with $Y$ in place of $X$
gives
\[
  e_{\Gamma_n}(X,Y)\geq\frac12\frac{tn}{\log n}.
\]
This again contradicts \eqref{eq:prime-robust-upper}, because
\[
  \frac{\nu nt+n}{tn/\log n}
  \leq\frac\kappa{\log n}
  +O\!\left(\frac{\log^2n}{n}\right)=o(1).
\]

\emph{Case 3: $s,t>\rho_n n$.}
The hypotheses of Lemma \ref{prop:prime-large-mixing} hold, so
\[
  e_{\Gamma_n}(X,Y)\geq c_1\frac{st}{\log n}.
\]
This is incompatible with \eqref{eq:prime-robust-upper}, since
\[
  \frac{c_1st/\log n}{\nu nt}
  =\frac{c_1(s/n)\log n}{\kappa}
  \geq\frac{c_1\rho_0\log n}
  {\kappa(\log\log n)^2}\longrightarrow\infty,
\]
and the additive term $n$ is negligible.  Every case is contradictory,
which proves robust outexpansion.
\end{proof}

\begin{proof}[Proof of \cref{thm:prime-circle}]
Choose a perfect matching $M$ by \cref{lem:prime-matching} and form
$D_M$. By Lemma \ref{lem:prime-semidegree}, we have
\[
  \delta^0(D_M)>\frac n{2\log n}=\eta n.
\]
By Lemma \ref{prop:prime-robust}, $D_M$ is a
robust $(\nu,\tau)$-outexpander. The parameters in
\eqref{eq:prime-parameters} satisfy
\[
  \nu\leq\tau=\frac\eta{16}<\frac1{16}
\]
and
\[
  4\left(\frac{\log^2n}{n}\right)^{1/13}
  <\frac\kappa{\log^2n}=\nu
\]
for all sufficiently large $n$.  Hence \cref{thm:lo-patel} gives a
directed Hamilton cycle in $D_M$.  By Lemma \ref{lem:prime-lifting}, this cycle
lifts to a Hamilton cycle of $\Gamma_n$.
\end{proof}

\section{Preliminaries for Theorem \ref{thm:primitive-main}}
\label{sec:finite-tools}

For a positive integer $r$, let $\omega(r)$ be the number of distinct
prime divisors of $r$, and put
\[
  W(r)=2^{\omega(r)}.
\]
Thus $W(r)$ is the number of square-free divisors of $r$.  We write
$\mu$ for the M\"obius function. Also, we define the radical of $r$ by
$$
\operatorname{Rad}(r)=\prod_{\substack{\ell\mid r\\ \ell \text{ prime}}}\ell.
$$

Let $e\mid q-1$. An element $x\in\F_q$ is \emph{$e$-free} if $x\ne0$
and $x=y^d$ with $d\mid e$ implies $d=1$. In particular, $x$ is
primitive exactly when it is $(q-1)$-free. Let $\widehat{\F_q^*}$ and $\widehat{\F_q}$ denote the groups of multiplicative and additive characters of $\F_q$, respectively. We extend every multiplicative
character of $\F_q^*$ to $0$ by setting $\chi(0)=0$. The standard
characteristic-function is
\begin{equation}\label{eq:e-free-characteristic}
 \one_{e\text{-free}}(x)
 =\frac{\varphi(e)}e
  \sum_{d\mid e}\frac{\mu(d)}{\varphi(d)}
  \sum_{\substack{\chi\in\widehat{\F_q^*}\\\operatorname{ord}\chi=d}}
  \chi(x).
\end{equation}

\begin{lemma}\label{lem:finite-gauss}
Let $e\mid q-1$ and let $\psi$ be a nontrivial additive character of
$\F_q$.  Then
\[
 \left|\sum_{\substack{x~ e\text{-free}}}\psi(x)\right|
 \leq\frac{\varphi(e)}eW(e)\sqrt q.
\]
\end{lemma}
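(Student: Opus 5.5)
We plan to insert the characteristic function \eqref{eq:e-free-characteristic} directly into the sum and interchange the order of summation. This gives
\[
 \sum_{x\ e\text{-free}}\psi(x)
 =\frac{\varphi(e)}e\sum_{d\mid e}\frac{\mu(d)}{\varphi(d)}
 \sum_{\substack{\chi\in\widehat{\F_q^*}\\ \operatorname{ord}\chi=d}}
 \sum_{x\in\F_q}\chi(x)\psi(x).
\]
Here we use the convention $\chi(0)=0$, which allows us to run the inner sum over all of $\F_q$. Only square-free $d$ contribute, because of the factor $\mu(d)$.

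The inner sums are split into two cases.

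1. For $d=1$, the character $\chi$ is trivial, so the inner sum is
\[
 \sum_{x\neq0}\psi(x)=-1,
\]
since $\psi$ is nontrivial. Its absolute value is $1\leq\sqrt q$.

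2. For $d>1$, the character $\chi$ is nontrivial, so the inner sum is a Gauss sum $G(\chi,\psi)$. The classical evaluation gives $\abs{G(\chi,\psi)}=\sqrt q$.

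In both cases every inner character sum is bounded by $\sqrt q$ in absolute value.

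Next we count characters. Since $\widehat{\F_q^*}$ is cyclic of order $q-1$ and $d\mid e\mid q-1$, there are exactly $\varphi(d)$ characters of order $d$. This count cancels the weight $1/\varphi(d)$. The triangle inequality then bounds the total by
\[
 \frac{\varphi(e)}e\sum_{d\mid e}\abs{\mu(d)}\sqrt q=\frac{\varphi(e)}e\,W(e)\sqrt q,
\]
because $\sum_{d\mid e}\abs{\mu(d)}$ is the number of square-free divisors of $e$, namely $W(e)$.

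There is no real obstacle in this argument. The only points needing care are the trivial-character term, which gives $1$ rather than $\sqrt q$ but is harmless since $1\leq\sqrt q$, and the use of the standard Gauss sum modulus $\sqrt q$ for nontrivial $\chi$ and $\psi$ over a general $\F_q$. The latter is classical: expand $\abs{G}^2$ and use orthogonality of characters.
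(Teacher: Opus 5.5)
Your proposal is correct and follows essentially the same route as the paper: substitute the characteristic function \eqref{eq:e-free-characteristic}, observe that the principal character contributes $-1$ and each nonprincipal character contributes a Gauss sum of modulus $\sqrt q$, and use that the $\varphi(d)$ characters of order $d$ cancel the weight $1/\varphi(d)$. What remains is a sum over the square-free divisors of $e$, which gives the factor $W(e)$.
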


\begin{proof}
Substituting \eqref{eq:e-free-characteristic}, we obtain
\[
 \sum_{\substack{x\in\F_q\\x\ e\text{-free}}}\psi(x)=\frac{\varphi(e)}{e}\sum_{d\mid e}\frac{\mu(d)}{\varphi(d)}
 \sum_{\substack{\chi\in\widehat{\F_q^*}\\ \operatorname{ord}(\chi)=d}}
 \sum_{x\in\F_q}\chi(x)\psi(x).
\]
For the principal multiplicative character, the inner sum equals $-1$.
For every nonprincipal multiplicative character it is a Gauss sum of
absolute value $\sqrt q$.  There are $\varphi(d)$ characters of order
$d$, and only square-free divisors $d$ contribute. Taking absolute values gives the
bound.
\end{proof}

Put
\begin{equation*}\label{eq:Lambda-q}
  \Lambda_q
  =\frac{\varphi(q-1)}{q-1}W(q-1)\sqrt q.
\end{equation*}
For $X,Y\subseteq\F_q$, put
\begin{align*}
 E^+(X,Y)&=\abs{\{(x,y)\in X\times Y:x+y\in\mathcal P_q\}},\\
 E^-(X,Y)&=\abs{\{(x,y)\in X\times Y:x-y\in\mathcal P_q\}}.
\end{align*}

\begin{lemma}\label{lem:finite-mixing}
For all $X,Y\subseteq\F_q$,
\[
  \left|E^+(X,Y)-\frac{\varphi(q-1)}q\abs X\abs Y\right|
  \leq\Lambda_q\sqrt{\abs X\abs Y},
\]
and
\[
  \left|E^-(X,Y)-\frac{\varphi(q-1)}q\abs X\abs Y\right|
  \leq\Lambda_q\sqrt{\abs X\abs Y}.
\]
\end{lemma}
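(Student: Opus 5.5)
We expand the indicator of $\mathcal P_q$ by \eqref{eq:e-free-characteristic} with $e=q-1$, or more cleanly, we Fourier-expand the indicator function $f=\one_{\mathcal P_q}$ over the additive group $\F_q$. For each additive character $\psi\in\widehat{\F_q}$, put $\widehat f(\psi)=\sum_{z\in\mathcal P_q}\overline{\psi(z)}$. Orthogonality of additive characters gives
\[
  E^+(X,Y)=\sum_{x\in X}\sum_{y\in Y}f(x+y)
  =\frac1q\sum_{\psi\in\widehat{\F_q}}\widehat f(\psi)
  \Bigl(\sum_{x\in X}\psi(x)\Bigr)\Bigl(\sum_{y\in Y}\psi(y)\Bigr).
\]
The trivial character contributes exactly $\abs{\mathcal P_q}\abs X\abs Y/q=\varphi(q-1)\abs X\abs Y/q$, which is the stated main term.

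For the nontrivial characters, Lemma \ref{lem:finite-gauss} with $e=q-1$, applied to $\overline\psi$ (which is again nontrivial), gives $\abs{\widehat f(\psi)}\leq\Lambda_q$. Hence the error is at most
\[
  \frac{\Lambda_q}{q}\sum_{\psi}\Bigl|\sum_{x\in X}\psi(x)\Bigr|\Bigl|\sum_{y\in Y}\psi(y)\Bigr|.
\]
By Cauchy--Schwarz and Parseval, $\sum_\psi\abs{\sum_{x\in X}\psi(x)}^2=q\abs X$, and likewise for $Y$. So the sum over $\psi$ is at most $q\sqrt{\abs X\abs Y}$, and the error is at most $\Lambda_q\sqrt{\abs X\abs Y}$. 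Dropping the nonnegative trivial term from the Cauchy--Schwarz bound only helps.

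For $E^-$ the same argument applies with $f(x-y)$. The expansion now involves $\sum_{y\in Y}\overline{\psi(y)}$ in place of $\sum_{y\in Y}\psi(y)$. Its modulus is the same, so Parseval gives the identical bound. Equivalently, $E^-(X,Y)=E^+(X,-Y)$ with $\abs{-Y}=\abs Y$.

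There is no real obstacle here. The only care needed is to match conjugations consistently, and to check that Lemma \ref{lem:finite-gauss} applies to every nontrivial $\psi$. It does, because its hypotheses hold for any nontrivial additive character.
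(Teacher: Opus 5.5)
Your proposal is correct and follows essentially the same route as the paper. Both use the additive Fourier expansion of $\one_{\mathcal P_q}$, take the main term from the trivial character, bound the nontrivial coefficients by $\Lambda_q$ via Lemma~\ref{lem:finite-gauss} (applied to $\overline\psi$), and finish with Cauchy--Schwarz and Parseval; your observation $E^-(X,Y)=E^+(X,-Y)$ is equivalent to the paper's use of $\psi(x-y)=\psi(x)\overline{\psi(y)}$.
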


\begin{proof}
Recall that for a function $f:\F_q\to\mathbb{C}$, Fourier inversion gives
\[
 f(t)=\frac1q\sum_{\psi\in\widehat{\F_q}}\widehat f(\psi)\psi(t),
\]
where
\[
 \widehat f(\psi)=\sum_{u\in\F_q}f(u)\overline{\psi(u)}.
\]
Let $f=\mathbf{1}_{(q-1)\text{-free}}$. By the definition of $E^+(X,Y)$ and Fourier inversion, we have
\begin{align*}
 E^+(X,Y)
 &=\sum_{x\in X}\sum_{y\in Y}f(x+y)\\
 &=\frac1q\sum_{\psi\in\widehat{\F_q}}\widehat f(\psi)
 \left(\sum_{x\in X}\psi(x)\right)
 \left(\sum_{y\in Y}\psi(y)\right).
\end{align*}
The trivial additive character contributes
\[
 \frac{\varphi(q-1)|X||Y|}{q}.
\]
If $\psi$ is nontrivial, Lemma~\ref{lem:finite-gauss} gives $|\widehat f(\psi)|\leq\Lambda_q$. Cauchy's inequality then yields
\begin{align*}
 \left|E^+(X,Y)-\frac{\varphi(q-1)|X||Y|}{q}\right|
 &\leq\frac{\Lambda_q}{q}
 \sum_{\substack{\psi\in\widehat{\F_q}\\\psi\text{ nontrivial}}}
 \left|\sum_{x\in X}\psi(x)\right|
 \left|\sum_{y\in Y}\psi(y)\right|\\
 &\leq\frac{\Lambda_q}{q}
 \sqrt{(q|X|-|X|^2)(q|Y|-|Y|^2)}\\
 &\leq\Lambda_q\sqrt{|X||Y|}.
\end{align*}
For $E^-(X,Y)$ one uses $\psi(x-y)=\psi(x)\overline{\psi(y)}$, and the same argument applies.
\end{proof}

An interval in $\F_p$ is a set of consecutive residues.  Put
\begin{equation*}\label{eq:L-function}
  L(x)=\frac2{\pi^2}\log x+1.
\end{equation*}
The explicit P\'olya--Vinogradov inequality of Frolenkov and
Soundararajan \cite[Theorem~2]{FrolenkovSoundararajan} implies that, for
$p>1200$, every nonprincipal character $\chi$ modulo $p$ and every
interval $I\subseteq\F_p$ satisfy
\begin{equation}\label{eq:PV-common}
  \left|\sum_{a\in I}\chi(a)\right|\leq\sqrt p\,L(p).
\end{equation}

\begin{lemma}
\label{cor:primitive-interval}
Let $p>1200$ be prime, let $I\subseteq\F_p$ be an interval, and put
$I^*=I\setminus\{0\}$.  Then
\[
 \left|\abs{\mathcal P_p\cap I}
 -\frac{\varphi(p-1)}{p-1}\abs{I^*}\right|
 \leq\frac{\varphi(p-1)}{p-1}
 W(p-1)\sqrt p\,L(p).
\]
\end{lemma}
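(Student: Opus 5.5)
We count $\mathcal P_p\cap I$ by summing the characteristic function \eqref{eq:e-free-characteristic} with $e=p-1$ over $a\in I$. Since every multiplicative character is extended by $\chi(0)=0$ and $0$ is not primitive, summing over $I$ is the same as summing over $I^*$. This gives
\[
  \abs{\mathcal P_p\cap I}
  =\frac{\varphi(p-1)}{p-1}
   \sum_{d\mid p-1}\frac{\mu(d)}{\varphi(d)}
   \sum_{\operatorname{ord}\chi=d}\ \sum_{a\in I}\chi(a).
\]

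The term $d=1$ is the principal character. It contributes exactly $\sum_{a\in I}\chi_0(a)=\abs{I^*}$, which yields the main term $\frac{\varphi(p-1)}{p-1}\abs{I^*}$.

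For $d>1$, only square-free $d$ contribute because of the factor $\mu(d)$. There are exactly $\varphi(d)$ characters of order $d$, and each is nonprincipal modulo $p$. Since $p>1200$, \eqref{eq:PV-common} bounds each inner sum by $\sqrt p\,L(p)$. The weight $1/\varphi(d)$ then cancels the count $\varphi(d)$, so each square-free $d>1$ contributes at most $\sqrt p\,L(p)$. The number of square-free divisors $d>1$ of $p-1$ is $W(p-1)-1\le W(p-1)$. Multiplying by the outer factor $\varphi(p-1)/(p-1)$ gives the stated error bound.

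There is no serious obstacle. The only points to check are that the principal character's sum over $I$ is exactly $\abs{I^*}$ (because of the convention $\chi(0)=0$), and that \eqref{eq:PV-common} applies to arbitrary intervals, including those that wrap around modulo $p$. The latter holds as stated, since an interval in $\F_p$ was defined as a set of consecutive residues.
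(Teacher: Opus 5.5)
Your proof is correct and is exactly the paper's argument: you sum \eqref{eq:e-free-characteristic} with $e=p-1$ over $I$, read off the main term $\frac{\varphi(p-1)}{p-1}\abs{I^*}$ from the principal character (using $\chi(0)=0$), and bound each of the $\varphi(d)$ nonprincipal characters of square-free order $d>1$ by \eqref{eq:PV-common}. You also supply the bookkeeping the paper leaves implicit, namely that the weight $1/\varphi(d)$ cancels the count $\varphi(d)$ and that there are at most $W(p-1)$ such $d$.
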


\begin{proof}
Sum \eqref{eq:e-free-characteristic} over $I$ with $e=p-1$. The
principal character contributes
$\varphi(p-1)\abs{I^*}/(p-1)$. Bound every nonprincipal contribution by
\eqref{eq:PV-common} and the stated estimate follows.
\end{proof}

For undirected graphs we use the following theorem of Chv\'atal and
Erd\H{o}s \cite[Theorem~1]{ChvatalErdos}.

\begin{theorem}
\label{thm:chvatal-erdos}
Let $G$ be a finite simple graph on at least three vertices.  If its
vertex connectivity $\kappa(G)$ is at least its independence number
$\alpha(G)$, then $G$ is Hamiltonian.
\end{theorem}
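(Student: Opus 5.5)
We prove \cref{thm:chvatal-erdos} by the classical longest-cycle argument. If the cycle is not Hamiltonian, the successors of the attachment points of an outside component, together with one vertex of that component, form an independent set of size at least $\kappa(G)+1$. This contradicts $\alpha(G)\leq\kappa(G)$.

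\emph{Step 1: degenerate cases.} Write $\kappa=\kappa(G)$ and $\alpha=\alpha(G)$, and let $G$ have $n\geq3$ vertices.
\begin{itemize}
\item If $\kappa=1$, then $\alpha\leq1$, so $G$ is complete and hence Hamiltonian because $n\geq3$.
\item If $\kappa=0$, then $G$ is disconnected and $\alpha\geq2$, so this case cannot occur.
\end{itemize}
So assume $\kappa\geq2$. Then $\delta(G)\geq\kappa\geq2$. Taking a longest path and using the neighbours of its endpoint, $G$ contains a cycle of length at least $\delta(G)+1\geq\kappa+1$.

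\emph{Step 2: set-up.} Let $C$ be a longest cycle, with a fixed orientation, and write $x^+$ for the successor of $x$ on $C$. Then $\abs{V(C)}\geq\kappa+1$. Suppose for contradiction that $V(C)\neq V(G)$. Let $H$ be a component of $G-V(C)$, and let $x_1,\ldots,x_k$ be the vertices of $C$ having a neighbour in $H$.

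We claim $k\geq\kappa$. If every vertex of $C$ has a neighbour in $H$, then $k=\abs{V(C)}\geq\kappa+1$. Otherwise $\{x_1,\ldots,x_k\}$ separates $H$ from the remaining vertices of $C$, so $k\geq\kappa$.

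\emph{Step 3: the key exchange arguments.} We show three things.
\begin{enumerate}
\item No successor $x_i^+$ has a neighbour in $H$. Otherwise, replacing the edge $x_ix_i^+$ by a path $x_i\to H\to x_i^+$ through $H$ gives a longer cycle. In particular, the successors $x_i^+$ are distinct from all the $x_j$ and lie outside $H$.
\item The successors are pairwise nonadjacent. Suppose $x_i^+x_j^+\in E(G)$ with $i\neq j$. Form the cycle that:
\begin{itemize}
\item goes from $x_i$ through $H$ to $x_j$;
\item runs backward along $C$ from $x_j$ to $x_i^+$;
\item uses the edge $x_i^+x_j^+$;
\item runs forward along $C$ from $x_j^+$ back to $x_i$.
\end{itemize}
This cycle contains all of $V(C)$ together with at least one vertex of $H$, contradicting the maximality of $C$.
\item Choosing any $h\in V(H)$, the set $\{h,x_1^+,\ldots,x_k^+\}$ is independent. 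Indeed, $h$ is adjacent to no vertex of $C$ that lacks a neighbour in $H$, and each $x_i^+$ is such a vertex by item 1.
\end{enumerate}

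\emph{Step 4: conclusion.} The independent set from Step 3 has size $k+1\geq\kappa+1>\alpha$, which is impossible. Hence $C$ is a Hamilton cycle.

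The main point requiring care is the crossing-path construction in item 2 of Step 3. One must check that the orientations are chosen so that the new cycle really traverses all of $C$: the segment from $x_i^+$ to $x_j$ is reversed, and the segment from $x_j^+$ to $x_i$ is kept forward. One must also check that the $H$-path is internally disjoint from $C$, which holds because $H$ is a component of $G-V(C)$. The remaining steps are routine.
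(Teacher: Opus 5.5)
Your proof is correct. The paper does not prove this statement; it cites Chv\'atal and Erd\H{o}s, and your argument is the standard longest-cycle proof of that result.

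The one difference from their original argument is the source of the $\kappa$ attachment points. You take the neighbourhood $N(H)=\{x_1,\ldots,x_k\}$ of a component $H$ of $G-V(C)$. Since it separates $H$ from the rest of $C$, $k\geq\kappa$ follows directly from the definition of connectivity. Chv\'atal and Erd\H{o}s instead use Menger's fan lemma to get $\kappa$ paths from a single outside vertex to $C$ that share only that vertex. Your version avoids Menger's theorem at the cost of working with a whole component. After that, the two exchange arguments and the independent set $\{h,x_1^+,\ldots,x_k^+\}$ are the same.

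Two small points do not affect correctness:
\begin{itemize}
\item The case $\kappa=1$ in Step~1 cannot occur. The condition $\alpha\leq1$ forces $G=K_n$, which has $\kappa=n-1\geq2$.
\item The case where every vertex of $C$ has a neighbour in $H$ is already ruled out by your item~1 of Step~3. So Step~1 only needs to supply the existence of a cycle, not one of length at least $\kappa+1$.
\end{itemize}
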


\section{Primitive-sum graphs on a finite field}
\label{sec:Hq}

We first prove the finite-field part of
\cref{thm:primitive-main}.  The argument is a direct comparison between
independence and connectivity.

\begin{proof}[Proof of \cref{thm:primitive-main}(1)]
Let $I$ be an
independent set of size $s$ in $H_q$.  Partition $I$ into disjoint sets
$X,Y$ with sizes $\lfloor s/2\rfloor$ and $\lceil s/2\rceil$.  Since
$E^+(X,Y)=0$, Lemma \ref{lem:finite-mixing} gives
\[
  \sqrt{\abs X\abs Y}\leq\frac{\Lambda_q q}{\varphi(q-1)}.
\]
It follows that
\begin{equation}\label{eq:Hq-alpha}
  \alpha(H_q)\leq\frac{2\Lambda_q q}{\varphi(q-1)}+1.
\end{equation}

Suppose first that
$H_q$ is disconnected. Let $A$ be a smallest component and put
$B=\F_q\setminus A$. Then $\abs B\geq\abs A$ and the no-edge estimate as above gives
\[
  \abs A\leq\frac{\Lambda_q q}{\varphi(q-1)}.
\]
This contradicts the fact that every vertex of $A$ has degree at least $\varphi(q-1)-1$ whenever
$$\varphi(q-1)-1>\frac{\Lambda_q q}{\varphi(q-1)}.$$ 
Thus the numerical condition below also ensures
that $H_q$ is connected.

Now let $S$ be a vertex cut, let $A$ be a smallest component of
$H_q-S$, and put $B=\F_q\setminus(A\cup S)$.  There are no edges from
$A$ to $B$, and $\abs B\geq\abs A$, so again
$\abs A\leq\Lambda_q q/\varphi(q-1)$.  For $a\in A$, at most $\abs A-1$ of its
neighbors lie in $A$. Hence
\begin{equation}\label{eq:Hq-kappa}
  \kappa(H_q)\geq (\varphi(q-1)-1)-(\abs{A}-1)\geq\varphi(q-1)-\frac{\Lambda_q q}{\varphi(q-1)}.
\end{equation}
By \cref{thm:chvatal-erdos}, equations
\eqref{eq:Hq-alpha}--\eqref{eq:Hq-kappa} prove Hamiltonicity whenever
\[
  \varphi(q-1)\geq\frac{3q}{q-1}W(q-1)\sqrt q+1.
\]
By Lemma \ref{prop:numerical-bounds}(1), this inequality holds for every prime
power
$
  q>18\,888\,871.
$
\end{proof}

\section{Primitive-sum graphs on a half interval}
\label{sec:Hp-plus}

\begin{proof}[Proof of \cref{thm:primitive-main}(2)]
Let $p>1200$ be prime and put $m=(p-1)/2$. The independent-set argument from \cref{sec:Hq}, applied inside $[m]$,
gives
\begin{equation}\label{eq:Hpplus-alpha}
  \alpha(H_p^+)\leq\frac{2\Lambda_p p}{\varphi(p-1)}+1.
\end{equation}

Fix $i\in[m]$.  As $j$ ranges over $[m]$, the residues $i+j$ form an
interval of length $m$ not containing $0$.  By
Lemma \ref{cor:primitive-interval}, and allowing for the forbidden loop
$j=i$,
\begin{equation*}\label{eq:Hpplus-degree}
 \delta(H_p^+)
 \geq\frac{\varphi(p-1)}{2}
 -\Lambda_pL(p)-1.
\end{equation*}
The same smallest-component argument used for $H_q$ shows, once the graph
is connected, that
\begin{equation}\label{eq:Hpplus-kappa}
 \kappa(H_p^+)\geq
 \delta(H_p^+)-\frac{\Lambda_p p}{\varphi(p-1)}+1.
\end{equation}
The sufficient inequality obtained by comparing
\eqref{eq:Hpplus-alpha} and \eqref{eq:Hpplus-kappa} is
\begin{equation}\label{eq:Hpplus-sufficient}
 \varphi(p-1)\geq
 \frac{6W(p-1)p^{3/2}+2p-2}
 {p-1-2W(p-1)\sqrt p\,L(p)}>0,
\end{equation}
which holds for every
$p>562\,582\,021$ by Lemma \ref{prop:numerical-bounds}(2). Indeed, this condition gives
$\delta(H_p^+)>\Lambda_p p/\varphi(p-1)$, so it also supplies the connectivity used
in \eqref{eq:Hpplus-kappa}. The
Chv\'atal--Erd\H{o}s theorem completes the proof.
\end{proof}

\section{Primitive-difference digraphs}
\label{sec:Hp-minus}

\begin{proof}[Proof of \cref{thm:primitive-main}(3)]
Let $p>3.661\times10^{52}$ be prime, set $m=(p-1)/2$, and put
\[
  \sigma=\frac{9999}{10000},
  \qquad
  \eta=\frac{\sigma\varphi(p-1)}p,
  \qquad
  \tau=\frac\eta{16},
  \qquad
  \nu=\frac{\eta^2}{16}.
\]
Then $0<\nu<\tau=\eta/16<1/16$.

Let $S\subseteq[m]$ satisfy
$\tau m\leq\abs S\leq(1-\tau)m$, and put
\[
  T=[m]\setminus\RN^+_{\nu,H_p^-}(S).
\]
Then 
$$
E^-(S,T)<\nu m\abs T.
$$
Suppose that
$\abs{\RN^+_{\nu,H_p^-}(S)}\leq\abs S+\nu m$. Then
\[
  \abs T\geq(\tau-\nu)m.
\]
On the other hand, Lemma \ref{lem:finite-mixing} gives
\begin{align*}
 E^-(S,T)
 \geq\abs T\left(
   \frac{\tau m\varphi(p-1)}p
   -\Lambda_p\sqrt{\frac{1-\tau}{\tau-\nu}}
 \right).
\end{align*}
The first inequality in Lemma \ref{prop:numerical-bounds}(3) is precisely the
condition that the parenthesis is larger than $\nu m$.  This is a
contradiction.  Thus $H_p^-$ is a robust $(\nu,\tau)$-outexpander.

For a fixed $i\in[m]$, each of the sets $i-[m]$ and $[m]-i$ is an
interval of length $m$ containing $0$. By Lemma \ref{cor:primitive-interval},
\begin{equation*}\label{eq:Hpminus-degree}
 \delta^0(H_p^-)
 \geq\frac{\varphi(p-1)}{p-1}
 \left(\frac{p-3}{2}-W(p-1)\sqrt p\,L(p)\right).
\end{equation*}
The second inequality in Lemma \ref{prop:numerical-bounds}(3) makes the
right-hand side greater than
\[
  \frac{\sigma\varphi(p-1)(p-1)}{2p}=\eta m.
\]
The third inequality there gives
\[
  \nu>4\left(\frac{\log^2m}{m}\right)^{1/13}.
\]
Every hypothesis of \cref{thm:lo-patel} is now satisfied, so $H_p^-$ has
a directed Hamilton cycle.
\end{proof}

\section{Numerical verification of the finite-field thresholds}
\label{sec:numerics}

We collect here the arithmetic calculations used above. 

\begin{lemma}\label{prop:numerical-bounds}
Put $\sigma=9999/10000$.
\begin{enumerate}
\item For every prime power $q>18\,888\,871$,
\begin{equation*}\label{eq:num-i}
 \varphi(q-1)\geq
 \frac{3q}{q-1}W(q-1)\sqrt q+1.
\end{equation*}

\item For every prime $p>562\,582\,021$,
\begin{equation*}\label{eq:num-ii-main}
 \varphi(p-1)\geq
 \frac{6W(p-1)p^{3/2}+2p-2}
 {p-1-2W(p-1)\sqrt p\,L(p)}>0.
\end{equation*}

\item For every prime $p>3.661\times10^{52}$, the following three
inequalities hold:
\begin{align}\label{eq:num-iii-mixing}
 \varphi(p-1)
 >\frac{32p^2W(p-1)\sqrt p}
 {\sigma(1-\sigma)(p-1)^2}
 \sqrt{\frac{p(16p-\sigma\varphi(p-1))}
 {\sigma\varphi(p-1)(p-\sigma\varphi(p-1))}},
\end{align}
\begin{align}\label{eq:num-iii-degree}
 \frac{p-3}{2}-\frac{\sigma(p-1)^2}{2p}
 >W(p-1)\sqrt p\,L(p),
\end{align}
\begin{align}\label{eq:num-iii-LP}
 \frac{\sigma^2\varphi(p-1)^2}{16p^2}
 >4\left(
 \frac{2\log^2((p-1)/2)}{p-1}
 \right)^{1/13}.
\end{align}
\end{enumerate}
\end{lemma}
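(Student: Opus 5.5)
The plan is the standard one for explicit Cohen--Trudgian-type thresholds. First I reduce all three parts to an explicit lower bound for $\varphi(r)/r$ and an explicit upper bound for $W(r)$, with $r=q-1$ or $r=p-1$. Then I finish with a finite computation.

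For $W(r)$ I will use the elementary bound
\[
  W(r)=2^{\omega(r)}\leq C_k\, r^{1/k},\qquad C_k=\prod_{\ell<2^k}\frac{2}{\ell^{1/k}},
\]
where $\ell$ runs over primes. The product is over primes $\ell$ with $2/\ell^{1/k}>1$; each prime divisor $\ell\geq 2^k$ contributes a factor at most $1$. For $\varphi(r)$ I will use
\[
  \frac{\varphi(r)}{r}=\prod_{\ell\mid r}\Bigl(1-\frac1\ell\Bigr)\geq\prod_{i\leq\omega(r)}\Bigl(1-\frac1{\ell_i}\Bigr),
\]
with $\ell_i$ the $i$-th prime. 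This is a function of $\omega(r)$ alone. It is therefore convenient to split according to $\omega=\omega(r)$ rather than use a continuous bound. For each fixed $\omega$, both $W=2^\omega$ and the lower bound $\varphi(r)\geq r\prod_{i\leq\omega}(1-1/\ell_i)$ are explicit. Each inequality then becomes
\[
  \sqrt r\gtrsim 2^{\omega}\cdot(\text{explicit constant depending on }\omega),
\]
up to the factor $L(p)$ in (2) and (3). This holds once $r$ exceeds an explicit bound $B(\omega)$. On the other hand, $r\geq\prod_{i\leq\omega}\ell_i$, the primorial. So for all sufficiently large $\omega$ the inequality is automatic, because the primorial grows much faster than $4^\omega$.

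For (1) I will run through the finitely many remaining $\omega$, say $\omega\leq 9$ or so. For each of them, $B(\omega)$ is either below the primorial, so the case is vacuous, or below $18\,888\,871$. The threshold itself is where the worst case $\omega=8$ or $9$ bites. A few borderline values will probably need a sharper treatment. For these I would use the Cohen--Huczynska sieve refinement, or directly enumerate the integers $r$ with given $\omega$ lying between the primorial and $B(\omega)$. Since the statement is about prime powers $q$, the enumeration must check $q=r+1$ against the inequality itself rather than the crude bound. This final sweep is the step I expect to be the main obstacle: it is a computer search, and the stated constants are presumably exactly where it terminates.

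For (2) the same scheme applies. The denominator $p-1-2W\sqrt pL(p)$ is positive once $\sqrt p\gg 2^\omega\log p$. The factor $L(p)$ is handled by solving $\sqrt p/\log p>\dots$ iteratively within each $\omega$-class. For (3) the numbers are cruder, since the requirement is roughly $\varphi(p-1)^2\gtrsim p^{2-2/13}$ up to logarithms, from \eqref{eq:num-iii-LP}. So I will bound $\varphi(p-1)/p\geq c/\log\log p$ by Rosser--Schoenfeld's explicit inequality
\[
  \frac{r}{\varphi(r)}<e^\gamma\log\log r+\frac{2.51}{\log\log r}.
\]
I will combine this with $W(r)\leq r^{1.385/\log\log r}$ (Robin's explicit bound on $\omega$). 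With these, \eqref{eq:num-iii-mixing}, \eqref{eq:num-iii-degree} and \eqref{eq:num-iii-LP} are monotone, and for \eqref{eq:num-iii-degree} I note that $(p-3)/2-\sigma(p-1)^2/(2p)\approx(1-\sigma)p/2$. Each reduces to checking a single real-variable inequality at $p=3.661\times10^{52}$, done by direct numerical evaluation. Here \eqref{eq:num-iii-LP} is the binding one, which explains the size of the threshold.
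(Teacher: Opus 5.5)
Your plan for parts (1) and (2) is essentially the paper's. You split by $\omega(q-1)$ or $\omega(p-1)$, use the primorial bounds $N\ge P_r=p_1\cdots p_r$ and $\varphi(N)/N\ge\prod_{j\le r}(1-1/p_j)$, let large $\omega$ take care of itself, and settle the borderline configurations by hand. In (1) the paper's single exception is $N=2P_8$. There the inequality really fails, but $q=19\,399\,381=67\cdot289\,543$ is not a prime power, which is exactly the check you anticipate. In (2) the crude bound $\varphi(N)/N\ge\prod_{j\le 9}(1-1/p_j)$ is not enough at $p=562\,582\,021$, so one must split by the pattern of prime divisors, as the paper does. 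The Cohen--Huczynska sieve plays no role: the statement is a fixed inequality in $\varphi$ and $W$, so only your enumeration option is meaningful.

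The genuine gap is in part (3), at \eqref{eq:num-iii-LP}, which you rightly identify as the binding inequality. With $N=p-1$ it is equivalent to
\[
  \Bigl(\frac{\sigma\varphi(N)}{8p}\Bigr)^{26}\frac{N}{2\log^2(N/2)}>1 .
\]
The threshold $X=3.661\times10^{52}$ is calibrated to the bound $\varphi(N)/N\ge\alpha_{32}=\prod_{j\le32}(1-1/p_j)\approx0.1130$, valid when $\omega(N)\le32$. The paper gets a value $>1.00009$ at $p=X$, a margin of about $10^{-4}$. At $N\approx X$ one has $\log\log N\approx4.796$, so Rosser--Schoenfeld gives only $N/\varphi(N)<e^{\gamma}\cdot4.796+2.51/4.796\approx9.065$, i.e.\ $\varphi(N)/N>0.1103$. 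After the $26$th power this loses a factor $(0.1103/0.1130)^{26}\approx0.53$. So the ``single real-variable inequality at $p=3.661\times10^{52}$'' you propose to check is false, and this route only starts to work around $p\approx7\times10^{52}$.

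The fix is to keep your $\omega$-split in part (3) as well. For $\omega(p-1)\le32$, use $\varphi(N)/N\ge\alpha_{32}$ together with monotonicity in $p$. For $\omega(p-1)=r\ge33$, use $p\ge P_r+1$ and show that the resulting bound improves with $r$; the paper proves the corresponding function of $P_r+1$ is increasing in $r$. Your treatment of \eqref{eq:num-iii-mixing} and \eqref{eq:num-iii-degree} via an explicit power bound for $W$ is fine, since both have enormous slack. One minor slip: the rough requirement from \eqref{eq:num-iii-LP} is $\varphi(p-1)^2\gtrsim p^{2-1/13}$, not $p^{2-2/13}$.
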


\begin{proof}
Let $p_j$ be the $j$th prime and define
\[
 P_r=\prod_{j=1}^r p_j,
 \qquad
 \alpha_r=\frac{\varphi(P_r)}{P_r},
 \qquad
 \beta_r=\frac{\varphi(P_r)}{\sqrt{P_r}}.
\]
If $\omega(N)=r$, then
\begin{equation}\label{eq:primorial-bounds}
 N\geq P_r,
 \qquad
 \frac{\varphi(N)}N\geq\alpha_r,
 \qquad
 \frac{\varphi(N)}{\sqrt N}\geq\beta_r.
\end{equation}
Indeed, if $N=\prod_{j=1}^r\ell_j^{a_j}$ with
$\ell_1<\cdots<\ell_r$ primes and $a_1,\dots,a_r$ positive integers, then $\ell_j\geq p_j$ and
\[
 \frac{\varphi(N)}{\sqrt N}
 =\prod_{j=1}^r
 \ell_j^{(a_j-1)/2}\frac{\ell_j-1}{\sqrt{\ell_j}}.
\]
Each factor is minimized when $a_j=1$ and $\ell_j=p_j$.

\smallskip
\noindent\emph{Part (1).}
Put $N=q-1$ and $r=\omega(N)$. After division by $\sqrt N$, the assertion in part (1) becomes
\begin{equation}\label{eq:num-i-normalized}
 \frac{\varphi(N)}{\sqrt N}
 \geq3\cdot2^r\left(1+\frac1N\right)^{3/2}
 +\frac1{\sqrt N}.
\end{equation}
Here $N>18\,888\,870$.  If $r\leq7$, the right-hand side is less than
$385$, whereas
\[
 \frac{\varphi(N)}{\sqrt N}
 \geq\alpha_7\sqrt N
 >\frac{3072}{17017}\sqrt{18\,888\,870}>784.
\]

Suppose $r=8$.  If $N$ is square-free, a finite split according to the largest prime factor
shows that its increasing prime tuple is coordinatewise at least one of
\begin{gather*}
 (2,3,5,7,13,17,19,23),\\
 (2,3,5,7,11,13,23,29),\quad
 (2,3,5,7,11,17,19,29),\\
 (2,3,5,7,11,13,19,37),\quad
 (2,3,5,7,11,13,17,41).
\end{gather*}
The smallest resulting lower bound is
\[
\frac{\varphi(N)}{\sqrt N}
\geq
\prod_{\ell\in\{2,3,5,7,11,13,23,29\}}
\frac{\ell-1}{\sqrt\ell}
=
\frac{3\,548\,160}{\sqrt{20\,030\,010}}
>790.
\]
If $N$ is not square-free, then, apart from
$N=2P_8=19\,399\,380$, one has $N\geq23\,483\,460$ and hence
\[
 \frac{\varphi(N)}{\sqrt N}
 \geq\alpha_8\sqrt{23\,483\,460}>828.
\]
The exceptional value would give
\[
 q=19\,399\,381=67\cdot289\,543,
\]
which is not a prime power.  In all $r=8$ cases, the right-hand side of
\eqref{eq:num-i-normalized} is less than $769$.

Finally,
\[
 \beta_9=\frac{36\,495\,360}{\sqrt{223\,092\,870}}
 >4\cdot2^9.
\]
For $r\geq9$, each additional factor
$(p_j-1)/\sqrt{p_j}$ is greater than $2$, so
$\beta_r>4\cdot2^r$.  The right-hand side of
\eqref{eq:num-i-normalized} is smaller than $4\cdot2^r$.  This proves
part~(1).

\smallskip
\noindent\emph{Part (2).} Put
\[
  r=\omega(p-1),
  \qquad
  t=\frac{\varphi(p-1)}{p-1}.
\]
For $u>1$ and $a>0$, define
\[
  \mathcal H_r(u,a)=
  \frac{2^{r+1}\sqrt u\,L(u)}{u-1}
  +\frac{6\cdot2^r u^{3/2}}{a(u-1)^2}
  +\frac{2}{a(u-1)}.
\]
The assertion in part~(2) is equivalent to
\[
  \mathcal H_r(p,t)\leq1.
\]
Indeed, this inequality makes the first summand strictly smaller
than $1$, giving denominator positivity, and rearrangement yields
the required lower bound for $\varphi(p-1)$.

Direct differentiation shows that $\mathcal H_r(u,a)$ decreases
with $u>1$. Clearly, it also decreases with $a$ and increases with $r$.
We use the following numerical evaluations, where
\[
  \alpha_*=\alpha_8\frac{36}{37}.
\]
Direct substitution gives
\[
\begin{array}{c|r|c|c}
 r & u & a & \mathcal H_r(u,a)\\ \hline
 8 & 562\,582\,021   & \alpha_8    & <0.489\\
 9 & 562\,582\,021   & \alpha_*    & <0.998\\
 9 & 601\,380\,781   & \alpha_9    & <0.979\\
10 & 6\,469\,693\,231 & \alpha_{10} & <0.626
\end{array}
\]

If $r\leq8$, then $t\geq\alpha_8$, and hence
\[
  \mathcal H_r(p,t)
  \leq\mathcal H_8(562\,582\,021,\alpha_8)<1.
\]

Suppose next that $r=9$, and write the distinct prime divisors of
$p-1$ as $s_1<\cdots<s_9$. Unless
\[
  \operatorname{Rad}(p-1)=P_8q,
  \qquad q\in\{23,29,31\},
\]
we have $t\geq\alpha_*$. Indeed, if $s_9\geq37$, then
\[
  t\geq\alpha_8\frac{36}{37}=\alpha_*.
\]
Otherwise, outside the three exceptional patterns, necessarily
$s_8\geq23$ and $s_9\geq29$, so
\[
  t\geq
  \alpha_7\frac{22}{23}\frac{28}{29}
  >\alpha_*.
\]
Thus every nonexceptional pattern satisfies
\[
  \mathcal H_9(p,t)
  \leq\mathcal H_9(562\,582\,021,\alpha_*)<1.
\]
For the three exceptional patterns, observe that
\[
  p-1>562\,582\,020=2P_8\cdot29.
\]
Since $p-1$ is a multiple of its radical, we obtain, respectively,
\[
  p-1\geq3P_8\cdot23,
  \qquad
  p-1\geq3P_8\cdot29,
  \qquad
  p-1\geq2P_8\cdot31.
\]
Consequently,
\[
  p\geq2P_8\cdot31+1=601\,380\,781,
\]
and therefore
\[
  \mathcal H_9(p,t)
  \leq\mathcal H_9(601\,380\,781,\alpha_9)<1.
\]

Finally, suppose that $r\geq10$. Then
\[
  \mathcal H_r(p,t)
  \leq\mathcal H_r(P_r+1,\alpha_r).
\]
The expression on the right decreases with $r$. To verify this,
note that Euclid's argument gives
$p_{r+1}\leq P_r+1$, whence
\[
  L(P_{r+1}+1)\leq2L(P_r+1).
\]
The ratios of the three summands in
$\mathcal H_{r+1}(P_{r+1}+1,\alpha_{r+1})$
to their counterparts in $\mathcal H_r(P_r+1,\alpha_r)$
are at most
\[
  \frac4{\sqrt{p_{r+1}}},
  \qquad
  \frac{2\sqrt{p_{r+1}}}{p_{r+1}-1},
  \qquad
  \frac1{p_{r+1}-1},
\]
respectively. All three are smaller than $1$ since $p_{r+1}\geq 31$. Since
$P_{10}+1=6\,469\,693\,231$, it follows that
\[
  \mathcal H_r(p,t)
  \leq\mathcal H_{10}(P_{10}+1,\alpha_{10})
  <0.626<1.
\]
This proves part~(2).

\smallskip
\noindent\emph{Part (3).}
Put $X=3.661\times10^{52}$, $N=p-1$, and $r=\omega(N)$.
We first establish \eqref{eq:num-iii-LP}. Define
\[
 F_r(x)=
 \left(\frac{\sigma\alpha_r(x-1)}{8x}\right)^{26}
 \frac{x-1}{2\log^2((x-1)/2)}.
\]
By \eqref{eq:primorial-bounds}, it suffices to show that
$F_r(p)>1$. For $x>2e^2+1$, the function $F_r(x)$ increases
with $x$, while for fixed $x$ it decreases with $r$.
Direct evaluation gives
\[
 F_{32}(X)>1.00009,
 \qquad
 F_{33}(P_{33}+1)>1.6086.
\]
Thus, if $r\leq32$, then
\[
 F_r(p)\geq F_{32}(p)>F_{32}(X)>1.
\]

For $r\geq33$, we have $p\geq P_r+1$. We claim that
$F_r(P_r+1)$ increases with $r$ in this range.
Again we have $p_{r+1}\leq P_r+1$.
Consequently,
\[
 \log(P_{r+1}/2)<3\log(P_r/2).
\]
Therefore, by Bernoulli's inequality, we obtain
\[
 \frac{F_{r+1}(P_{r+1}+1)}{F_r(P_r+1)}
 >
 \frac{p_{r+1}}{9}\left(1-\frac{1}{p_{r+1}}\right)^{26}
 \geq\frac{p_{r+1}-26}{9}>1.
\]
It follows that
\[
 F_r(p)\geq F_r(P_r+1)
 \geq F_{33}(P_{33}+1)>1.
\]
This proves \eqref{eq:num-iii-LP}.

For the remaining inequalities, we use the elementary bound
\[
 W(N)<3N^{1/3}.
\]
Indeed,
\[
 \frac{W(N)^3}{N}
 \leq\prod_{\substack{\ell\mid N\\ \ell\ \mathrm{prime}}}
       \frac8\ell
 \leq\frac{8^4}{2\cdot3\cdot5\cdot7}<27,
\]
since each factor corresponding to a prime $\ell\geq11$
is less than $1$.

Put
\[
 t=\frac{\sigma\varphi(N)}p,
 \qquad
 \delta=1-\sigma=\frac1{10000}.
\]
As $N$ is even, we have $0<t<1/2$. The inequality
\eqref{eq:num-iii-LP}, already proved, gives
\[
 t>
 8\left(\frac{2\log^2(N/2)}N\right)^{1/26}
 >8p^{-1/26}.
\]
On the other hand, \eqref{eq:num-iii-mixing} is equivalent to
\[
 t^{3/2}>
 \frac{32W(N)}{\delta\sqrt p}
 \left(\frac pN\right)^2
 \sqrt{\frac{16-t}{1-t}}.
\]
Using $W(N)<3p^{1/3}$, $(p/N)^2<2$, and $t<1/2$, we find
\[
 \frac{32W(N)}{\delta\sqrt p}
 \left(\frac pN\right)^2
 \sqrt{\frac{16-t}{1-t}}
 <
 \frac{768\sqrt2}{\delta}\,p^{-1/6}.
\]
Since
\[
 p^{17/156}>X^{17/156}>534668>
 \frac{48}{\delta},
\]
the last expression is less than
$16\sqrt2\,p^{-3/52}$, which in turn is less than
$t^{3/2}$. This proves \eqref{eq:num-iii-mixing}.

Finally, elementary simplification yields
\[
 \frac{p-3}{2}-\frac{\sigma N^2}{2p}
 =
 \frac{\delta p}{2}+\sigma-\frac32-\frac{\sigma}{2p}
 >
 \frac{\delta p}{2}-1
 >
 \frac p{30000}.
\]
The function $x^{1/6}/L(x)$ is increasing for $x\geq X$, and
\[
 \frac{X^{1/6}}{L(X)}>2.25\times10^7>90000.
\]
Therefore
\[
 W(N)\sqrt p\,L(p)
 <3p^{5/6}L(p)
 <\frac p{30000},
\]
which proves \eqref{eq:num-iii-degree}.

\end{proof}

\end{document}